\newcommand{\NN}{\mathbb{N}}
\newcommand{\ZZ}{\mathbb{Z}}
\newcommand{\RR}{\mathbb{R}}
\newcommand{\setm}[2]{\{ #1 : #2 \}}
\renewcommand{\wr}{\mathbin{\mathrm{wr}}}
\DeclareMathOperator{\LO}{\mathrm{LO}}
\newenvironment{enumerate-(a)}{\begin{enumerate}[label={\upshape (\alph*)}, leftmargin=2pc]}{\end{enumerate}}
\newenvironment{enumerate-(a)-r}{\begin{enumerate}[label={\upshape (\alph*)}, leftmargin=2pc,resume]}{\end{enumerate}}
\newenvironment{enumerate-(a)-5}{\begin{enumerate}[label={\upshape (\alph*)}, leftmargin=2pc,start=5]}{\end{enumerate}}
\newenvironment{enumerate-(A)}{\begin{enumerate}[label={\upshape (\Alph*)}, leftmargin=2pc]}{\end{enumerate}}
\newenvironment{enumerate-(A)-r}{\begin{enumerate}[label={\upshape (\Alph*)}, leftmargin=2pc,resume]}{\end{enumerate}}
\newenvironment{enumerate-(i)}{\begin{enumerate}[label={\upshape (\roman*)}, leftmargin=2pc]}{\end{enumerate}}
\newenvironment{enumerate-(i)-r}{\begin{enumerate}[label={\upshape (\roman*)}, leftmargin=2pc,resume]}{\end{enumerate}}
\newenvironment{enumerate-(I)}{\begin{enumerate}[label={\upshape (\Roman*)}, leftmargin=2pc]}{\end{enumerate}}
\newenvironment{enumerate-(I)-r}{\begin{enumerate}[label={\upshape (\Roman*)}, leftmargin=2pc,resume]}{\end{enumerate}}
\newenvironment{enumerate-(1)}{\begin{enumerate}[label={\upshape (\arabic*)}, leftmargin=2pc]}{\end{enumerate}}
\newenvironment{enumerate-(1)-r}{\begin{enumerate}[label={\upshape (\arabic*)}, leftmargin=2pc,resume]}{\end{enumerate}}
\newtheorem{theorem}{Theorem}[section]
\newtheorem{lemma}[theorem]{Lemma}
\newtheorem{corollary}[theorem]{Corollary}
\newtheorem{proposition}[theorem]{Proposition}
\newtheorem{problem}[theorem]{Problem}
\theoremstyle{definition}
\newtheorem{definition}[theorem]{Definition}
\newtheorem{example}[theorem]{Example}
\theoremstyle{remark}
\newtheorem{remark}[theorem]{Remark}
\begin{document}

\title[]{Borel structures on the space of left-orderings}

\date{}
\author[F.~Calderoni]{Filippo Calderoni}

\address{Department of Mathematics, Statistics, and Computer Science, University
of Illinois at Chicago, Chicago IL 60607, USA}
\email{fcaldero@uic.edu}

\author[A.~Clay]{Adam Clay}

\address{Department of Mathematics, 420 Machray Hall, University of
Manitoba, Winnipeg, MB, R3T 2N2}
\email{Adam.Clay@umanitoba.ca}

 \subjclass[2020]{Primary: 03E15, 06F15, 20F60, 54H05.}
\thanks{Adam Clay was partially supported by NSERC grant RGPIN-2014-05465. The first author would like to thank Marcin Sabok for interesting discussions at the Arctic Set Theory Workshop~4.  We thank the anonymous referee for their valuable comments and suggestions.  Also we thank Uri Andrews, Turbo Ho, and Dino Rossegger for kindly pointing out an inaccuracy in Example~2.10 and for suggesting a way to fix it.}

\maketitle

\begin{abstract}
In this paper we study the Borel structure of the space of left-orderings $\LO(G)$ of a group $G$ modulo the natural conjugacy action, and by using tools from descriptive set theory we find many examples of countable left-orderable groups such that the quotient space \(\LO(G)/G\) is not standard. This answers a question of Deroin, Navas, and Rivas. We also prove that the countable Borel equivalence relation induced from the conjugacy action of \(\mathbb{F}_{2}\) on \(\LO(\mathbb{F}_{2})\) is universal, and leverage this result to provide many other examples of countable left-orderable groups $G$ such that the natural $G$-action on $\LO(G)$ induces a universal countable Borel equivalence relation.
\end{abstract}


\section{Introduction}

A group \(G\) is \emph{left-orderable} if it admits a total ordering \(<\) such that \(g < h\) implies \(fg < f h\) for all \(f, g, h \in G\), we call such total orderings \emph{left-orderings} of the group $G$.  We can equivalently define a group to be left-orderable if it admits a \emph{positive cone}, which is a subset $P$ of $G$ satisfying:
\begin{enumerate-(1)}
\item
\label{cond : lo1}
\(P \cdot P\subseteq P\);
\item
\label{cond : lo2}
\(P\sqcup P^{-1}\sqcup\{1\} = G\).
\end{enumerate-(1)}
There is a correspondence between left-orderings of $G$ and positive cones, by associating to each ordering $<$ of $G$ the set $P = \{ g \in G \mid g>1 \}$; and by associating to any subset \(P\subseteq G\) satisfying \ref{cond : lo1} and  \ref{cond : lo2} the left-invariant ordering of \(G\) defined by \(g<_{P}h\) if and only if \(g^{-1}h \in P\) for all \(g, h \in G\).  If a left-ordering $<$ of a group $G$ also happens to be right-invariant, that is, $g<h$ implies $gf<hf$ for all $f,g,h \in G$, then it is called a \emph{bi-ordering} of $G$.  Bi-orderings correspond precisely to the positive cones that additionally satisfy $gPg^{-1} \subset P$ for all $g \in G$.

Because of this association between left-orderings and positive cones, the set \(\LO(G)\) of all left-orderings of a group $G$ can be identified with a subspace of \(2^{G}\), which turns out to be closed, hence compact.  If one restricts attention to countable groups only, \(2^{G}\) becomes metrizable and \(\LO(G)\) is therefore a compact Polish space. 

In this paper, we consider the conjugacy action of a left-orderable group \(G\) on \(\LO(G)\), defined as follows:
Given an ordering of \(G\) with positive cone \(P\) and associated left-ordering $<$, for each element \(g \in G\) the image of \(<\) under \(g\) is the ordering \(<_{g}\) whose positive cone is \(gPg^{-1}\).  This is equivalent to
\[
f<_{g}h\quad \iff\quad gfg^{-1} < ghg^{-1}.
\]
Note that the bi-orderings of $G$ are precisely the fixed points of the $G$-action on $\LO(G)$, which turns out to be an action by homeomorphisms.  We will denote by \(E_\mathrm{lo}(G)\) the equivalence relation on \(\LO(G)\) whose classes are the orbits of the conjugacy action. Precisely, if \(P, Q \in \LO(G)\) we declare \((P,Q)\in E_\mathrm{lo}(G) \iff \exists g\in G \, (Q = gPg^{-1})\).

While the structure of this space and its natural $G$-action are not well-understood in general, both have been used to great effect.  For example, Witte-Morris used compactness of $\LO(G)$ to show that every left-orderable amenable group is locally indicable \cite{WM06}, a result which has since been generalized to locally invariant orderings and co-amenable subgroups~\cite{AntRiv}. (A group is \emph{locally indicable} if every finitely-generated nontrivial subgroup \(H\) of \(G\) admits a surjection \(H \to \mathbb{Z}\).) Similarly, Linnell~\cite{Lin11} used a clever combination of compactness of $\LO(G)$ and invariance of the derived sets under the $G$-action to show that every nontrivial left-orderable group has either $2^n$ left-orderings for some $n \geq 1$, or uncountably many left-orderings.  

The goal of this paper is to better understand $\LO(G)$ and its natural $G$-action by investigating the quotient Borel structure of  \(\LO(G)/G\). Deroin, Navas, and Rivas asked whether the quotient Borel structure of \(\LO(G)/G\) can be nonstandard~\cite[Question~2.2.11]{DerNavRiv}.  We first provide many examples of countable left-orderable groups \(G\) for which the answer is affirmative, by establishing a connection between the algebraic structure of $G$ and the Borel structure of  \(\LO(G)/G\).  Our approach uses tools from descriptive set theory to show:

\begin{theorem}
\label{thm : locally indicable}
If \(G\) is left-orderable but not locally indicable, then \(\LO(G)/G\) is not standard.
\end{theorem}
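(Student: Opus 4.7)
The plan is to show that the countable Borel equivalence relation $E_\mathrm{lo}(G)$ on the Polish space $\LO(G)$ is not smooth, which (for countable Borel equivalence relations on standard Borel spaces) is equivalent to non-standardness of $\LO(G)/G$.

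The first move is to reduce to a finitely generated witness. Since $G$ is not locally indicable, there is a finitely generated subgroup $H \leq G$ with no surjection onto $\mathbb{Z}$, equivalently with $H^{\mathrm{ab}}$ a torsion abelian group. As a subgroup of $G$, the group $H$ is itself left-orderable, but by the classical characterization of local indicability via Conradian orderings (Brodskii/Rhemtulla), $H$ admits no Conradian left-ordering. In particular, $H$ is not bi-orderable, so the $H$-conjugation action on $\LO(H)$ has no fixed points.

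Next I pass from the $G$-action to the $H$-action. Let $E^H$ denote the $H$-orbit equivalence relation on $\LO(G)$; it is a sub-equivalence relation of $E_\mathrm{lo}(G)$. Using Lusin--Novikov to enumerate $G$-orbits, combined with a Borel selector for $E_\mathrm{lo}(G)$ and a Borel selector for the coset space $H \backslash G$, one constructs a Borel selector for $E^H$. Hence smoothness of $E_\mathrm{lo}(G)$ transfers to smoothness of $E^H$, and it suffices to show $E^H$ is not smooth.

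The heart of the argument is showing that $E^H$ is not smooth by producing a Borel reduction $E_0 \leq_B E^H$. Consider the continuous $H$-equivariant restriction map $r \colon \LO(G) \to \LO(H)$, $r(P) = P \cap H$. For every $P \in \LO(G)$, the restricted ordering $r(P)$ is non-Conradian: there exist $g, h \in H$ with $g, h >_P 1$ and $h^{-n} g h^n$ unbounded above in $<_P$ as $n \to \infty$. I would use this dynamical instability to construct a Cantor scheme of orderings $(P_\sigma)_{\sigma \in 2^{<\mathbb{N}}}$ in which the extensions $\sigma 0$ and $\sigma 1$ correspond to distinct conjugation adjustments at scale $\lvert \sigma \rvert$, using the witnesses $(g,h)$. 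Taking limits along infinite branches gives a continuous injection $\Phi \colon 2^{\mathbb{N}} \to \LO(G)$; the construction should be calibrated so that $\alpha, \beta \in 2^{\mathbb{N}}$ satisfy $\alpha \mathrel{E_0} \beta$ iff $\Phi(\alpha) \mathrel{E^H} \Phi(\beta)$.

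The principal obstacle is the last step: extracting from the abstract failure of the Conradian condition a concrete orbit invariant (or dynamical obstruction) that separates distinct branches of the Cantor scheme into different $H$-orbits. A secondary concern is a possible alternative route in which one first builds the reduction inside $\LO(H)$ (obtaining $E_0 \leq_B E_\mathrm{lo}(H)$) and then lifts it to $\LO(G)$ via a Borel section of $r$; producing such a section requires a coherent extension of $H$-orderings to $G$-orderings, which is a delicate point since not every left-ordering of a subgroup extends to the ambient group.
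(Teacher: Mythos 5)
Your reduction to a finitely generated non-indicable subgroup $H$, and the observation that smoothness of $E_\mathrm{lo}(G)$ passes to the $H$-orbit equivalence relation $E^H \subseteq E_\mathrm{lo}(G)$, are fine (and the intermediate claim you aim for is in fact true). But the proof has a genuine gap exactly where you flag it: the entire content of the theorem is the step you describe as ``calibrate a Cantor scheme so that $\alpha \mathrel{E_0} \beta$ iff $\Phi(\alpha) \mathrel{E^H} \Phi(\beta)$,'' and no construction is given. Producing a Borel reduction $E_0 \leq_B E^H$ by hand from the mere failure of the Conradian condition is not routine: the witnesses $g,h$ to non-Conradianity depend on the ordering, conjugation by elements of $H$ scrambles them, and separating \emph{all} branches of a Cantor scheme into distinct $H$-orbits (not just finitely many points) requires an orbit invariant you have not identified. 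Your fallback route is also blocked for the reason you note: the restriction map $r(P)=P\cap H$ need not admit any section, since a left-ordering of $H$ need not extend to $G$. So as written the argument does not close.

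The paper avoids this difficulty entirely and never builds an $E_0$-embedding. Its route is: if $E_\mathrm{lo}(G)$ were smooth, then (Proposition~\ref{finite_orbit}) the conjugation action would have a \emph{finite orbit} --- this uses compactness of $\LO(G)$ to extract, via Zorn's Lemma, a minimal invariant set $M$, in which every orbit is dense; smoothness plus Proposition~\ref{Prop : dense orbit} forces a nonmeagre orbit, hence an isolated point, hence $M$ is a single finite orbit. Then a purely order-theoretic computation finishes: if the orbit of $P\in\LO(G)$ is finite, then for each $g\in G$ some power satisfies $g^{-n}Pg^{n}=P$, so for positive $g,h$ one gets $g <_P g^{n} <_P hg^{n}$, i.e.\ $<_P$ is Conradian, and Brodskii's theorem gives local indicability of $G$, a contradiction. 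If you want to salvage your outline, the cleanest fix is to replace your Cantor-scheme step by this compactness/Baire-category argument (applied either to the $G$-action or to your $H$-action on $\LO(G)$): ``smooth $\Rightarrow$ finite orbit $\Rightarrow$ Conradian $\Rightarrow$ locally indicable'' is the missing idea.
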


In a second approach to the problem, we use the theory of countable Borel equivalence relations to show that \(\LO(G)/G\) fails to be standard in the strongest possible sense for a large class of groups.   To better explain, we first recall some necessary background.

Suppose \(E\) and \(F\) are equivalence relations on the standard Borel spaces \(X\) and \(Y\), respectively. We say that \(E\) is \emph{Borel reducible} to \(F\) (written \(E\leq_{B} F\)) if there is a Borel map \(f\colon X\to Y\) such that
\[
x_{0}\mathbin{E}x_{1} \iff f(x_{0})\mathbin{F}f(x_{1}).
\]
Moreover when \(E\leq_{B} F\) and \(F\leq_{B} E\) we say that \(E\) and \(F\) are \emph{Borel equivalent} (written \(E\sim_{B}F\)).

We can take the statement ``\(E\leq_{B} F\)'' as a formal way of saying that the classification problem associated to \(E\), of determining whether to elements of \(X\) are \(E\)-equivalent, is not more complicated that the one associated to \(F\). In this precise sense Borel reducibility has been used to develop a complexity theory of definable equivalence relations. 
The main achievement in this area includes a series of anti-classification results showing that certain mathematical objects do not admit any reasonable classification.
For example, the work of Hjorth~\cite{Hjo99a} and Thomas~\cite{Tho03} shows that Baer's classification theorem cannot be extended to torsion-free abelian groups of rank \(k \geq 1\).
Moreover, Foreman and Weiss~\cite{ForWei} proved that conjugacy for measure-preserving transformations of the unit interval with Lebesgue measure is not Borel reducible to any isomorphism relation, hence is not classifiable by countable structures.

In particular, Borel reducibility is a tool that is fundamental to the analysis of the class of all \emph{countable Borel equivalence relations}.
Recall that Borel equivalence relation is said to be \emph{countable} if all of its equivalence classes are countable.
If a countable discrete group \(G\)  acts on a standard Borel space \(X\) in a Borel fashion, then the associated \emph{orbit equivalence relation}, whose classes are the orbits, is countable Borel. (In fact, by a theorem of Feldman and Moore~\cite{FelMoo} every countable Borel equivalence relation arises in that way.)

An important subclass of countable Borel equivalence relation consists of the ones that are Borel reducible to identity relation on \(\RR\). They are said to be \emph{smooth} (also \emph{tame}, or \emph{concretely classifiable}), and coincide with those whose quotient space carries a standard Borel structure. At the other extreme, there are countable Borel equivalence relations of maximal complexity with respect to $\leq_B$, which are called universal.
More precisely, a countable Borel equivalence
relation \(E\) is said to be \emph{universal} if  \(F \leq_{B} E\)
for every countable Borel
equivalence relation \(F\).

Between the extremes of smooth and universal, the class of countable Borel equivalence relations is rather complicated.  For instance, by a theorem of Adams and Kechris~\cite{AdaKec}
there are continuum many pairwise incomparable countable Borel equivalence relations up to Borel reducibility.

In the context of countable left-orderable groups, we can easily find examples of groups $G$ for which \(E_\mathrm{lo}(G)\) is smooth; torsion-free abelian groups are such an example.  Exploring the other extreme we show that there are also plenty of groups $G$ for which \(E_\mathrm{lo}(G)\) is universal, beginning with free groups.

\begin{theorem}
\label{maintheorem}
Let $\mathbb{F}_{n}$ denote the free group on $n$ generators.  If $n\geq2$ then \(E_\mathrm{lo}(\mathbb{F}_{n})\) is a  universal countable Borel equivalence relation.
\end{theorem}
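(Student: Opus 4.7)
The plan is to establish universality for $n=2$ and then transfer to $n\geq 3$. For the transfer step, $\mathbb{F}_{n}$ splits as a free product $\mathbb{F}_{2}\ast\mathbb{F}_{n-2}$, so using a free-product construction of left-orderings (such as Vinogradov's) one obtains a Borel and $\mathbb{F}_{2}$-equivariant embedding $\LO(\mathbb{F}_{2})\hookrightarrow\LO(\mathbb{F}_{n})$, giving $E_\mathrm{lo}(\mathbb{F}_{2})\leq_{B}E_\mathrm{lo}(\mathbb{F}_{n})$. Hence it suffices to prove the theorem for $\mathbb{F}_{2}$.

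For $\mathbb{F}_{2}$, the strategy is to exhibit a Borel reduction of a known universal countable Borel equivalence relation to $E_\mathrm{lo}(\mathbb{F}_{2})$. A convenient source is the orbit equivalence relation of the Bernoulli shift of $\mathbb{F}_{2}$ on the free part $X\subseteq 2^{\mathbb{F}_{2}}$, which is universal. The goal is to construct a Borel map $\Phi\colon X\to\LO(\mathbb{F}_{2})$ intertwining the shift with the conjugation action, i.e.
\[
\Phi(g\cdot x)=g\Phi(x)g^{-1}\qquad\text{for all } g\in\mathbb{F}_{2},\ x\in X,
\]
and satisfying the further property that $\Phi(x)$ and $\Phi(y)$ are $\mathbb{F}_{2}$-conjugate if and only if $x$ and $y$ lie in the same shift orbit. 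Equivariance gives the ``if'' direction of the reduction automatically.

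To define $\Phi(x)$, I would choose a normal subgroup $N\triangleleft\mathbb{F}_{2}$ whose conjugation dynamics are rich enough to record $x$---a natural candidate is the commutator subgroup $[\mathbb{F}_{2},\mathbb{F}_{2}]$, which is free of countably infinite rank and on which $\mathbb{F}_{2}/N\cong\mathbb{Z}^{2}$ acts by conjugation with a basis freely permuted. Using the values of $x$ indexed by cosets of $N$, one designs a left-ordering $<_{x}$ on $N$ whose local features at each basis element encode the corresponding bits of $x$; then $<_{x}$ is lifted to a left-ordering of $\mathbb{F}_{2}$ via the convex-extension construction using a fixed left-ordering on the quotient $\mathbb{F}_{2}/N$. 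Since convex extension is equivariant for normal subgroups, the resulting map $\Phi$ inherits the intertwining property.

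The principal obstacle is the reverse direction: showing that if $\Phi(x)$ and $\Phi(y)$ are conjugate in $\mathbb{F}_{2}$, then $x$ and $y$ must already be shift-equivalent. This is a rigidity statement demanding that the $x$-encoding inside $\Phi(x)$ be recoverable purely from intrinsic ordering data, so that any conjugating element of $\mathbb{F}_{2}$ must act as a shift on the encoded information. Establishing this rigidity---by isolating robust invariants of $\Phi(x)$ such as the convex-subgroup structure, distinguished cofinal sequences, or the induced dynamics on the real line from the dynamic realization---is the technical heart of the argument and the principal challenge.
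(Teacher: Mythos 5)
Your reduction of the problem to $\mathbb{F}_2$ (via a convex copy of $\mathbb{F}_2$ in $\mathbb{F}_n$ coming from the free product structure) is essentially the paper's Proposition~\ref{prop:free_grps}, though note that equivariance of the embedding $\LO(\mathbb{F}_2)\hookrightarrow\LO(\mathbb{F}_n)$ only gives one implication of a Borel reduction; you also need the converse, which the paper extracts from malnormality of the free factor (condition~\eqref{weakly malnormal} in Proposition~\ref{Thm : relconvex}). The serious problem is the $\mathbb{F}_2$ step, where you defer exactly the part that constitutes the proof: you never construct $\Phi$ or prove the ``rigidity'' direction, and the scaffolding you propose has concrete defects. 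First, the conjugation action of $\mathbb{F}_2/N\cong\mathbb{Z}^2$ on $N=[\mathbb{F}_2,\mathbb{F}_2]$ is only an outer action---elements of $N$ act by nontrivial inner automorphisms---and it does not freely permute a free basis of $N$ (conjugation by one generator permutes the Reidemeister--Schreier basis, but conjugation by the other sends basis elements to nontrivial products). So ``local features at basis elements'' are not transported equivariantly by all of $\mathbb{F}_2$, which is what $\Phi(g\cdot x)=g\Phi(x)g^{-1}$ requires. Second, if you repair equivariance by making the ordering on $N$ conjugation-invariant and indexing the encoded bits by cosets of $N$, then your encoding only records data indexed by $\mathbb{Z}^2$, i.e.\ at best a $\mathbb{Z}^2$-Bernoulli system, whose orbit equivalence relation is hyperfinite; universality cannot follow from such a reduction. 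The coordinates of the encoding must be (equivariantly) indexed by all of $\mathbb{F}_2$, and $[\mathbb{F}_2,\mathbb{F}_2]$ with the conjugation action does not provide that.

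For comparison, the paper sidesteps these obstructions by not working inside $\mathbb{F}_2$ directly. It first reduces $E(\mathbb{F}_2,2)$ to $E_\mathrm{lo}(C\wr\mathbb{F}_2)$: the base group $B=\bigoplus_{x\in\mathbb{F}_2}C_x$ is indexed by the elements of $\mathbb{F}_2$, conjugation permutes the summands by the regular action, and the set $A\subseteq\mathbb{F}_2$ is encoded by choosing $P$ or $P^{-1}$ in each coordinate of a lexicographic \emph{bi}-ordering $R_A$ of $B$ (Lemma~\ref{lex ords}); bi-invariance of $R_A$ and of the chosen cone $Q$ on $\mathbb{F}_2$ makes conjugation by an arbitrary element $hb\in W$ carry $\varphi(A)$ to $\varphi(hA)$, so both directions of the reduction come out of a single computation plus injectivity of $A\mapsto R_A$---this is precisely the rigidity you identify as the ``principal challenge'' but do not supply. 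The transfer back to free groups is then done by two general lemmas: $E_\mathrm{lo}(G)\leq_B E_\mathrm{lo}(\mathbb{F}_\infty)$ for every countable left-orderable $G$ (Corollary~\ref{corollary : ses countable G}, using bi-orderability of $\mathbb{F}_\infty$), and $E_\mathrm{lo}(\mathbb{F}_\infty)\leq_B E_\mathrm{lo}(\mathbb{F}_2)$ (Proposition~\ref{prop:infty to 2}, via a left relatively convex, weakly malnormal copy of $\mathbb{F}_\infty$ inside $\mathbb{F}_2$). If you want to salvage a direct construction inside $\mathbb{F}_2$, you would need to either prove the missing rigidity statement outright or reorganize the encoding so that the relevant ordering data is bi-invariant on a normal subgroup whose coordinates are permuted regularly by $\mathbb{F}_2$---which is, in effect, what the wreath-product detour accomplishes.
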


Combining Theorem~\ref{maintheorem} with the results of Section~3, we deduce that \(E_\mathrm{lo}(G)\) is universal for a large class of countable groups, including hyperbolic surface groups, the pure braid groups \(P_{n}\) with \( n\geq 3\), right angled Artin groups, and many others.

\section{Generating non-smooth examples}

We generate our first examples of groups for which \(\LO(G)/G\) is nonstandard by appealing to the following equivalence. (E.g., see~\cite[Proposition~6.3]{KecMil}).

\begin{proposition}
For a countable Borel equivalence relation \(E\) the following are equivalent:
\begin{enumerate-(i)}
\item
\(E\) is smooth; i.e., there is a Borel map \(f\colon X \to \mathbb{R}\) such that
\[
x_{0}\mathbin{E}x_{1} \iff f(x_{0})=f(x_{1}).
\]
\item
The space \(X/E\) with the quotient Borel structure is standard.
\end{enumerate-(i)}
\end{proposition}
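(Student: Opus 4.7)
The plan is to prove both implications directly, using as the main nontrivial input the Lusin–Suslin theorem that a countable-to-one Borel image of a standard Borel space is Borel.

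For the direction \mbox{(ii) $\Rightarrow$ (i)}, suppose the quotient $X/E$ is standard Borel. Then it is Borel-isomorphic to a Borel subset of $\mathbb{R}$, so pick any Borel injection $g\colon X/E \to \mathbb{R}$. The canonical projection $\pi\colon X \to X/E$ is Borel by the very definition of the quotient Borel structure (sets in $X/E$ are declared Borel precisely when their $\pi$-preimages are). Hence the composition $f = g \circ \pi\colon X \to \mathbb{R}$ is Borel, and by construction $f(x_{0}) = f(x_{1})$ iff $\pi(x_{0}) = \pi(x_{1})$ iff $x_{0} \mathbin{E} x_{1}$.

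For the forward direction \mbox{(i) $\Rightarrow$ (ii)}, let $f\colon X \to \mathbb{R}$ be a Borel reduction of $E$ to equality. Since each fiber $f^{-1}(r)$ is an $E$-class, and since $E$ is countable, the map $f$ is countable-to-one. The Lusin–Suslin theorem then ensures that $f(X)$ is a Borel subset of $\mathbb{R}$, and that $f(A)$ is Borel in $\mathbb{R}$ whenever $A \subseteq X$ is Borel. The map $f$ factors through the quotient as a bijection $\tilde{f}\colon X/E \to f(X)$. It remains to verify that $\tilde{f}$ is a Borel isomorphism between $X/E$ (with the quotient structure) and the standard Borel space $f(X)$.

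One checks each direction. If $B \subseteq f(X)$ is Borel, say $B = B' \cap f(X)$ with $B' \subseteq \mathbb{R}$ Borel, then $\pi^{-1}(\tilde{f}^{-1}(B)) = f^{-1}(B')$ is Borel in $X$, so $\tilde{f}^{-1}(B)$ lies in the quotient Borel structure. Conversely, if $C \subseteq X/E$ is quotient-Borel then $\pi^{-1}(C)$ is Borel in $X$, and invoking Lusin–Suslin again, $\tilde{f}(C) = f(\pi^{-1}(C))$ is Borel in $f(X)$. Thus $X/E$ is Borel-isomorphic to a Borel subset of $\mathbb{R}$, hence standard. The main technical obstacle is precisely this appeal to the Lusin–Suslin theorem on countable-to-one Borel maps; the rest is diagram chasing through the universal property of the quotient Borel structure.
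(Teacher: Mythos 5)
Your proof is correct. Note that the paper itself gives no argument for this proposition: it is quoted as a known equivalence with a citation to Kechris--Miller (Proposition 6.3), so you are supplying the standard proof rather than paralleling anything in the text. Both of your directions are sound: (ii) $\Rightarrow$ (i) is exactly the composition $g\circ\pi$ with a Borel embedding of the standard quotient into $\mathbb{R}$, and (i) $\Rightarrow$ (ii) correctly reduces to the fact that a countable-to-one Borel map between standard Borel spaces sends Borel sets to Borel sets, applied once to see $f(X)$ is Borel and once more to push forward $\pi^{-1}(C)$ for a quotient-Borel set $C$ (using that $\pi^{-1}(C)$ is $E$-invariant, so $f(\pi^{-1}(C))=\tilde f(C)$). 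The only quibble is attribution: the Lusin--Suslin theorem proper concerns \emph{injective} Borel maps; the countable-to-one version you need is usually derived from the Lusin--Novikov uniformization theorem, by splitting $f$ into countably many injective Borel pieces and then invoking Lusin--Suslin on each. With that bookkeeping made explicit (or the correct theorem cited), the argument is complete, and it is essentially the proof one finds in the cited reference.
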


It is clear that \(E\) is smooth if and only if \(E\) is Borel reducible to the identity on \(\mathbb{R}\). Moreover, the class of smooth equivalence relations is downward closed with respect to \(\leq_{B}\). So,
whenever a nonsmooth equivalence relation \(E\) is Borel reducible to \(F\) defined on \(Y\), we obtain that \(F\) is nonsmooth, hence the quotient space \(Y/F\) is not standard.

The following proposition is a consequence of classical results in descriptive set theory (see~\cite[Corollary~3.5]{Hjo}).

\begin{proposition}
\label{Prop : dense orbit}
Let \(G\) be a countable group acting by homeomorphisms on a Polish space \(X\), and let \(E_{G}\) be the corresponding orbit equivalence relation. If there is a dense orbit and every orbit is meager, then \(E_{G}\) is not smooth.
\end{proposition}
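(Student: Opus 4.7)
The plan is to prove the contrapositive: if $E_{G}$ is smooth, then either every orbit is meager fails or no orbit is dense. Suppose $E_{G}$ is smooth. Then there is a Borel $G$-invariant function $f\colon X \to \RR$ whose level sets are exactly the $G$-orbits.

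The first step is to establish a topological zero-one law: under the hypothesis that $G$ has a dense orbit $Gx_{0}$, every $G$-invariant Borel subset $B \subseteq X$ is either meager or comeager. For this, note that $B$ has the Baire property, so there is a largest open set $U^{*}$ in which $B$ is comeager (and $B \triangle U^{*}$ is meager). Since the $G$-action is by homeomorphisms and $B$ is $G$-invariant, the construction of $U^{*}$ is also $G$-equivariant, so $U^{*}$ is itself open and $G$-invariant. If $U^{*} \ne \emptyset$, then $U^{*}$ meets the dense orbit $Gx_{0}$; by $G$-invariance it contains $Gx_{0}$, hence is dense, forcing $B$ to be comeager. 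If $U^{*} = \emptyset$, then $B$ is meager.

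The second step is a bisection argument applied to the sets $A_{q} = f^{-1}((-\infty, q])$ for $q \in \QQ$. Each $A_{q}$ is Borel and $G$-invariant, hence meager or comeager by the previous step. Set $r = \sup\{q \in \QQ : A_{q} \text{ is meager}\}$. If $r = +\infty$, then $X = \bigcup_{q \in \QQ} A_{q}$ is a countable union of meager sets, contradicting the Baire category theorem. If $r = -\infty$, then all $A_{q}$ are comeager, so their intersection $\bigcap_{q \in \QQ} A_{q} = \emptyset$ is comeager, again impossible. Thus $r \in \RR$, and then $f^{-1}((-\infty, r)) = \bigcup_{q \in \QQ,\, q<r} A_{q}$ is meager while $f^{-1}((-\infty, r]) = \bigcap_{q \in \QQ,\, q>r} A_{q}$ is comeager. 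It follows that $f^{-1}(\{r\})$ is comeager. But $f$ separates $E_{G}$-classes, so $f^{-1}(\{r\})$ is a single $G$-orbit, contradicting the hypothesis that every orbit is meager.

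The main technical point is the topological zero-one law; once that is in hand, the rest is a clean Baire-category bookkeeping. The only subtlety is verifying that the ``largest open set in which $B$ is comeager'' is preserved by the homeomorphism action, which is immediate since homeomorphisms send meager sets to meager sets and open sets to open sets, so they commute with this operation.
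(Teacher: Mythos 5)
Your proof is correct. Note, though, that the paper does not actually prove this proposition: it simply cites it as a consequence of classical results (Hjorth, \emph{Classification and Orbit Equivalence Relations}, Corollary~3.5), so what you have written is a self-contained proof of the cited fact rather than a variant of an argument in the paper. Your route is the standard one hiding behind that citation: a dense orbit plus an action by homeomorphisms gives the topological zero-one law (generic ergodicity), so any Borel reduction \(f\colon X\to\RR\) of \(E_G\) to equality must be constant on a comeager set, whose nonempty fiber is then a single comeager --- in particular nonmeager --- orbit, contradicting the hypothesis that all orbits are meager. The only cosmetic difference from the textbook presentation is that you locate the generic value of \(f\) in \(\RR\) by taking the supremum of the rationals \(q\) with \(f^{-1}((-\infty,q])\) meager, whereas the usual argument composes with a reduction to \(2^{\NN}\) and determines the generic value coordinate-by-coordinate; the two are interchangeable. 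All the small points you rely on are sound: the dense orbit forces \(X\neq\emptyset\) so Baire category applies, Borel sets have the Baire property, the largest open set in which an invariant set is comeager is itself invariant since homeomorphisms preserve both openness and meagerness, and a nonempty fiber of \(f\) is exactly one orbit because \(f\) is a reduction.
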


With these results in hand, the fact that  \(\LO(G)/G\) is nonstandard for certain groups follows easily from existing results in the literature.

\begin{proposition}
If \(n\geq2\) or \(n=\infty\), then \(\LO(\mathbb{F}_{n})/\mathbb{F}_{n}\) is not standard.
\end{proposition}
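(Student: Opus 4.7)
The plan is to apply Proposition~\ref{Prop : dense orbit} in order to conclude that $E_\mathrm{lo}(\mathbb{F}_n)$ is not smooth; combined with the preceding equivalence between smoothness and standardness of the quotient, this immediately yields that $\LO(\mathbb{F}_n)/\mathbb{F}_n$ is not standard. The task therefore reduces to verifying the two hypotheses of that proposition: every conjugation orbit on $\LO(\mathbb{F}_n)$ is meager, and some orbit is dense.

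For meagerness, observe that $\mathbb{F}_n$ is countable for $n \geq 2$ and for $n = \infty$, so every orbit in $\LO(\mathbb{F}_n)$ is countable. A classical result (McCleary, and independently Navas) says that $\LO(\mathbb{F}_n)$ has no isolated points for $n \geq 2$; the case $n = \infty$ follows readily, since any putative isolated left-ordering would restrict to an isolated ordering on a finitely generated free subgroup. Hence $\LO(\mathbb{F}_n)$ is a perfect Polish space, and by the Baire category theorem every countable subset of it is meager. In particular every orbit is meager.

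For density, I would appeal to existing results in the literature on left-orderings of free groups. Concretely, one needs to exhibit a single left-ordering $P \in \LO(\mathbb{F}_n)$ such that, for every basic clopen neighborhood $U$ of $\LO(\mathbb{F}_n)$ -- that is, for every prescription of signs on a finite tuple of elements that is realized by some left-ordering -- there exists $g \in \mathbb{F}_n$ with $gPg^{-1} \in U$. The existence of such $P$ can be extracted from known constructions of left-orderings of free groups with rich dynamical behavior (work of Clay, McCleary, Navas, Rivas, and others); for $n = \infty$ one combines the construction on each finitely generated free subgroup with a diagonal/compactness argument, exploiting that any finite test set lies in some $\mathbb{F}_k \leq \mathbb{F}_\infty$.

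The principal obstacle is the dense-orbit requirement: perfectness of $\LO(\mathbb{F}_n)$ is not enough on its own, and the density must be realized through \emph{inner} automorphisms rather than the full automorphism group, which is genuinely a statement about the dynamics of the conjugation action. Once the correct density statement is in place, the rest of the proof is an immediate application of Proposition~\ref{Prop : dense orbit}.
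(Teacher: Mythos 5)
Your proposal is correct and follows essentially the same route as the paper: perfectness of $\LO(\mathbb{F}_n)$ (McCleary) gives that every orbit is meager, and Proposition~\ref{Prop : dense orbit} then reduces everything to exhibiting a dense conjugation orbit. The density input you flag as the principal obstacle is exactly the known result the paper cites, namely the existence of a left-ordering of $\mathbb{F}_n$ whose conjugates are dense in $\LO(\mathbb{F}_n)$, due to Clay~\cite{Cla12} and Rivas~\cite{Riv12}, so no new construction is needed.
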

\begin{proof}
A result of McLeary~\cite{McC85} ensures that \(\LO(\mathbb{F}_{n})\) is perfect.\footnote{The same result was also obtained using techniques from dynamics by Navas~\cite{Nav10} and, in a more general fashion, by Rivas~\cite{Riv12}.} Then each orbit of the conjugacy action  of \(\mathbb{F}_{n}\) in \(\LO(\mathbb{F}_{n})\) is meagre. Since \(\LO(\mathbb{F}_{n})\) admits a dense orbit (cf.~Clay \cite{Cla12} and Rivas~\cite{Riv12}),  it follows that \(\LO(\mathbb{F}_{n})/\mathbb{F}_{n}\) is not standard by Proposition~\ref{Prop : dense orbit}.
\end{proof}

A similar result follows from the recent work of Alonso, Brum and Rivas, showing that  \(\LO(G)\) admits a dense orbit under the natural $G$-action whenever $G$ is the fundamental group of a closed hyperbolic surface~\cite{ABR17}.

The examples we are able to generate using the existence of dense orbits are all examples of locally indicable groups that, in fact, turn out to yield universal countable Borel equivalence relations as well.  Our next technique produces groups $G$ which are not locally indicable, and for which  $E_\mathrm{lo}(G)$ is nonsmooth.

Recall that a \emph{minimal invariant set} $M$ for the action of a group $G$ on a space $X$ by homeomorphisms is a closed, $G$-invariant set $M \subset X$ satisfying the following:  If $C \subset X$ is any other closed, $G$-invariant set and $C \cap M \neq \emptyset$ then $ M \subset C$.  From this it follows that the orbit of every point in $M$ is, in fact, dense in $M$.

\begin{proposition}
\label{finite_orbit}
Suppose that $G$ is a countable group acting by homeomorphisms on a compact Polish space $X$ such that $E_G$ is smooth.  Then there exists a finite orbit.
\end{proposition}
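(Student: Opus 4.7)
The plan is to pass to a minimal invariant subset $M$ of $X$ and use smoothness together with Proposition~\ref{Prop : dense orbit} to force $M$ to contain an isolated point, then propagate this to conclude that $M$ is finite.

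First, by a standard Zorn's lemma argument applied to the family of nonempty closed $G$-invariant subsets of $X$ (using compactness to ensure that chains have nonempty intersection), there exists a nonempty closed $G$-invariant minimal set $M\subseteq X$. Then $M$ is itself a compact Polish space on which $G$ acts by homeomorphisms, and the restriction of $E_G$ to $M$ is Borel reducible to $E_G$ via the inclusion map, so it remains smooth. By minimality, every $G$-orbit in $M$ is dense in $M$.

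Now I apply the contrapositive of Proposition~\ref{Prop : dense orbit} to the induced action of $G$ on $M$: since every orbit is dense and the restricted equivalence relation is smooth, it is not the case that every orbit is meager, so some orbit $[x]_G$ is non-meager in $M$. Writing $[x]_G$ as a countable union of its singletons, some singleton $\{gx\}$ must be non-meager in $M$. For a singleton in a Polish space, this is equivalent to $\{gx\}$ having nonempty interior, i.e.\ $gx$ being isolated in $M$. Since $\{gx\}$ is then a nonempty open subset of $M$ and every orbit in $M$ is dense, every orbit meets $\{gx\}$ and therefore contains $gx$; thus $M$ coincides with the single orbit $[gx]_G$.

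Finally, since $G$ acts by homeomorphisms and every point of $M$ is a $G$-translate of the isolated point $gx$, every point of $M$ is isolated, so $M$ is a discrete compact space, hence finite. Therefore $M$ is a finite $G$-orbit in $X$, as required. The main delicate step is the third one: one must cleanly argue that a non-meager singleton in a Polish space is isolated, and that density of every orbit in $M$ upgrades a single isolated point to the conclusion that $M$ is a single orbit. The remaining steps (Zorn's lemma, stability of smoothness under restriction to an invariant Borel subset, and compact-plus-discrete implies finite) are routine.
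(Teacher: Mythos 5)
Your proposal is correct and follows essentially the same argument as the paper: Zorn's lemma plus compactness to obtain a minimal invariant set $M$, Proposition~\ref{Prop : dense orbit} applied to the restricted (still smooth) relation to produce a nonmeager orbit, hence an isolated point, then density of orbits to conclude $M$ is a single orbit that is discrete and compact, hence finite. No gaps; the step you flag as delicate (nonmeager singleton is open, and density upgrades one isolated point to all of $M$ being one orbit) is handled in the paper in exactly the same way.
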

\begin{proof}
Under the hypotheses of the proposition, 
let \[\mathcal{S} \coloneqq \{A \subseteq X \mid A\text{ is nonempty, closed and \(G\)-invariant}\}.\]
We note that \(\mathcal{S}\) is nonempty because \(X\in \mathcal{S}\) and it is  partially ordered  under inclusion.
A consequence of the compactness of \(X\) is that every chain \(\{A_{i}\}_{i\in I}\) satisfies the finite intersection property. It follows that \(\bigcap_{i\in I}A_{i}\) is nonempty and necessarily closed and \(G\)-invariant. Therefore, every chain in \(X\) admits has lower bound. Then, using Zorn's Lemma, there exists a minimal invariant set $M\subset X$ for the action of $G$. 

Since the action of $G$ on $X$ induces a smooth countable Borel equivalence relation, so does the action of $G$ on $M$.  Note that every orbit of every point in $M$ is dense, so by Proposition~\ref{Prop : dense orbit}, $M$ must have a nonmeagre orbit.  

Suppose that $x_0 \in X$ is the point whose orbit is nonmeagre.  Writing the orbit of $x_0$ as the union of singletons $\bigcup_{g \in G} \{ g \cdot x_0 \}$ expresses that the orbit as a countable union of nowhere dense sets, unless one of the singletons $\{ g \cdot x_0 \} $ is open in $M$. We conclude that $M$ admits an isolated point, and since every orbit is dense in $M$, this means $M$ must in fact consist of a single orbit.

But now $M$ is a countable, closed (hence compact) subset and every point in $M$ is  isolated (since all points in $M$ are contained in a single orbit, which contains an isolated point).  This is not possible if $M$ is infinite.  
\end{proof}

Recall that a left-ordering $<$ of a group $G$ is \emph{Conradian} if and only if for every pair of positive elements $g, h \in G$, there exists $n \in \mathbb{N}$ such that $g < hg^n$. It is a theorem of Brodskii~ \cite{Brodskii84} that a group \(G\) is Conradian left-orderable if and only if \(G\) is locally indicable.

\begin{proof}[Proof of Theorem~\ref{thm : locally indicable}]
Suppose that the orbit of $P \in \mathrm{LO}(G)$ is finite.  Then, given any $g \in G$, there exists $n \in \mathbb{N}$ such that $g^{-n} P g^n = P$.  In particular, for any pair of elements $g, h \in P$ this implies that $g^{-n} h g^n \in P$, so that the left-ordering associated to $P$ satisfies $g^n < hg^n$.  Since $g$ is positive, this implies $g <_P \dots <_Pg^n <_P hg^n$, so that $P$ determines a Conradian ordering.  The result then follows from Proposition~\ref{finite_orbit}.
\end{proof}

\begin{remark}
Note that the arguments in the proof of Theorem~\ref{thm : locally indicable} in fact imply that any group $G$ for which $E_\mathrm{lo}(G)$ is smooth must be virtually bi-orderable.
\end{remark}

There are many groups which satisfy the hypotheses of Proposition~\ref{thm : locally indicable}, 
such as the braid groups \(B_{n}\) for \(n \geq 5\) (by \cite{GL69}, their commutator subgroups are finitely generated and perfect), or the fundamental group of many compact $3$-manifolds (e.g. see~\cite{BRW05} for plenty of examples).

On the other hand, as mentioned in the introduction, if $G$ is torsion-free abelian then $E_\mathrm{lo}(G)$ is smooth since the action of $G$ on $\LO(G)$ is trivial.  For similar trivial reasons, if $G$ is a so-called Tararin group (meaning that $\LO(G)$ is finite) then $G$ is nonabelian, yet $E_\mathrm{lo}(G)$ is smooth.  The next example shows that smoothness of $E_\mathrm{lo}(G)$ is more subtle than either $G$ being abelian or $\LO(G)$ being finite, as it exhibits a nonabelian group $G$ for which $\LO(G)$ is infinite, the action of $G$ on $\LO(G)$ is nontrivial, and $E_\mathrm{lo}(G)$ is smooth. We first recall the following standard definition.

\begin{definition}
Let \(G\) be a group equipped with a left-ordering $<$. A subgroup $C$ of $G$ is \emph{convex relative to $<$} if whenever $g, h\in C$ and $f \in G$ with $g<f<h$, then $f \in C$.  A subgroup  \(C\subseteq G\) is \emph{left relatively
convex} in \(G\) if \(C\) is convex relative to some left ordering of \(G\).
\end{definition}

Left-relatively convex subgroups were recharacterized by Antol\'{i}n and Rivas~\cite[Lemma~2.1]{AntRiv} as follows.

\begin{proposition}
\label{prop : AntRiv}
A subgroup 
\(C\) of $G$ is left relatively convex if and only if there exists \(Q\subseteq G\) satisfying:
\begin{enumerate}
\item \label{AntRiv : i}
\( QQ \subseteq Q \);
\item \label{AntRiv : ii}
\(CQC\subseteq Q\);
\item \label{AntRiv : iii}
\(Q\cup Q^{-1}\cup C = G\).
\end{enumerate}
In this case, if \(P\in \LO(C)\), then \(P\cup Q\) is a positive cone in \(G\).
Conversely if \(R \in \LO(G)\) and $R$ decomposes as \(R = {P\cup Q}\) with \(P\in \LO(C)\), then \(Q\) satisfies~\ref{AntRiv : i}--\ref{AntRiv : iii}.
\end{proposition}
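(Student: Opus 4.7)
The plan is to establish a direct correspondence: a positive cone $R$ on $G$ making $C$ convex decomposes as $R = P \sqcup Q$ where $P = R \cap C$ restricts to a positive cone on $C$ and $Q = R \setminus C$ satisfies \ref{AntRiv : i}--\ref{AntRiv : iii}; conversely, given $Q$ satisfying these conditions and any $P \in \LO(C)$, the union $R := P \cup Q$ is a positive cone on $G$ making $C$ convex. The main equivalence and both clauses of the ``in particular'' paragraph all fall out of this single correspondence.

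For the forward direction, the key observation is that every $q \in R \setminus C$ uniformly dominates $C$ in the ordering: $c <_{R} q$ for every $c \in C$, with the mirror statement $q^{-1} <_{R} c$. Both follow at once from convexity, since if some $c \in C$ satisfied $q < c$ then $1 < q < c$ with endpoints in $C$ would force $q \in C$. Armed with this, \ref{AntRiv : i} is immediate: if $q_{1}q_{2} \in C$ for $q_i \in Q$, then $1 < q_{2} < q_{1}q_{2}$ with endpoints in $C$ forces $q_{2} \in C$. For \ref{AntRiv : ii}, given $c, c' \in C$ and $q \in Q$, left-multiplying the inequality $q > c^{-1}$ by $c$ yields $cq > 1$, and left-multiplying $q^{-1} < c'$ by $q$ yields $qc' > 1$; neither product lies in $C$ (otherwise $q$ would), so $CQC \subseteq Q$. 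Finally \ref{AntRiv : iii} is just the trichotomy $G = R \sqcup R^{-1} \sqcup \{1\}$ combined with $1 \in C$.

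For the backward direction, given $Q$ satisfying \ref{AntRiv : i}--\ref{AntRiv : iii} (read as a disjoint decomposition, so in particular $Q \cap C = \emptyset$) and $P \in \LO(C)$, I would set $R := P \cup Q$ and verify the positive-cone axioms. Multiplicative closure splits into four cases: $PP \subseteq P$ holds by assumption, $QQ \subseteq Q$ is \ref{AntRiv : i}, and both $PQ, QP \subseteq CQC \subseteq Q$ follow from \ref{AntRiv : ii} by setting one $C$-argument to the identity. The trichotomy follows from $P \sqcup P^{-1} \sqcup \{1\} = C$ and \ref{AntRiv : iii}, with $Q \cap Q^{-1} = \emptyset$ forced because $qq^{-1} = 1 \in C$ would violate \ref{AntRiv : i} together with $Q \cap C = \emptyset$. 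For convexity of $C$ under $<_{R}$, it suffices to show no element outside $C$ lies strictly between two elements of $C$: if $g \in Q$, then $c^{-1}g \in CQ \subseteq Q \subseteq R$ for every $c \in C$, giving $c <_{R} g$; if $g^{-1} \in Q$, then $g^{-1}c \in QC \subseteq Q \subseteq R$ for every $c \in C$, giving $g <_{R} c$. The converse clause of the ``in particular'' paragraph then follows: a decomposition $R = P \cup Q$ with $P \in \LO(C)$ necessarily has $P = R \cap C$ and $Q = R \setminus C$, so the forward direction applied to this convex setup delivers \ref{AntRiv : i}--\ref{AntRiv : iii}.

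The main obstacle is the disjointness bookkeeping implicit in \ref{AntRiv : iii}: the condition has to be read as a genuine disjoint decomposition for $P \cup Q$ to be a positive cone at all, and everything then cascades cleanly. Beyond that, the whole proof is powered by a single conceptual idea, namely that in a convex decomposition, elements of $R \setminus C$ uniformly dominate $C$, and the closure axioms \ref{AntRiv : i}--\ref{AntRiv : ii} axiomatise exactly this ``living above $C$'' behaviour. No bi-invariance of the ordering is ever needed; every inequality is massaged using only left multiplication.
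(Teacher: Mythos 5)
First, a point of comparison: the paper offers no proof of this proposition at all — it is quoted from Antol\'{\i}n and Rivas \cite[Lemma~2.1]{AntRiv} — so your self-contained verification cannot be matched against an argument in the text; it is essentially the standard direct proof, and most of it is sound, including your (correct and necessary) observation that \ref{AntRiv : iii} must be read as a disjoint decomposition, with $Q\cap Q^{-1}=\emptyset$ then forced by \ref{AntRiv : i}. Two steps need repair, however. In your check of \ref{AntRiv : i}, the chain $1<q_2<q_1q_2$ is not available: $q_2<q_1q_2$ means $1<q_2^{-1}q_1q_2$, and positivity is not preserved under conjugation in a mere left-ordering (indeed, if $q_1q_2$ did lie in $C$, your own domination observation would give $q_2>q_1q_2$). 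The fix is one line: left-multiplying $1<q_2$ by $q_1$ gives $1<q_1<q_1q_2$, so $q_1q_2\in C$ would force $q_1\in C$ by convexity, a contradiction. The second issue is your last sentence, where you derive the final clause by applying the forward direction to ``this convex setup'': a decomposition $R=P\cup Q$ with $P\in\LO(C)$ and $Q=R\setminus C$ does \emph{not} by itself make $C$ convex relative to $<_R$. For a counterexample take $G=\ZZ$, $C=2\ZZ$ and $R$ the positive integers: then $P=R\cap C\in\LO(C)$, but $Q=R\setminus C$ is the set of positive odd integers and violates \ref{AntRiv : i}. So the converse clause is only true under the additional hypothesis that $C$ is convex relative to $<_R$ — that is, it is the forward implication restated — which is the hypothesis present in the source lemma and the reading needed for the applications in Section~3; this is an imprecision in the statement as quoted rather than in your overall strategy, but your proof should assume the convexity explicitly instead of treating it as a consequence of the decomposition. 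With those two repairs (and noting, pedantically, that building $P\cup Q$ requires $\LO(C)\neq\emptyset$), your argument is complete and correct, which is more than the paper itself supplies for this statement.
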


This turns out to be much more useful for our purposes, and so will be used without reference in the examples below, as well as in the proofs of Section 3.

\begin{example}
Let $\langle z \rangle$ be an infinite cyclic group whose generator $z$ acts on the abelian group $\ZZ \times \ZZ$ by the matrix $\begin{psmallmatrix}-1 & 0\\0 & -1\end{psmallmatrix}$.  Let $G$ denote the semidirect product $(\ZZ \times \ZZ)\rtimes \langle z \rangle$.
Then $z$ satisfies $z^{-1}xz = x^{-1}$ for all $x \in \ZZ \times \ZZ$.  We first note that this implies $\ZZ \times \ZZ$ is convex in every left-ordering of $G$.

To see this, suppose that $1<z <x^k$ for some $k \in \ZZ$, for some left-ordering of $G$.  Then $1<z^{-1}x^k$, and hence $1<z^{-1}x^kz$ as the right hand side is a product of positive elements.  But $z^{-1}x^kz = x^{-k}$ is negative, a contradiction.  Thus if $z>1$ then $x^k <z$ for all $k \in \mathbb{Z}$.  By similar arguments we conclude $z^{-1} < x^k <z$ for all $k \in \ZZ$ whenever $z >1$, and $z < x^k <z^{-1}$ for all $k \in \ZZ$ whenever $z <1$.  It follows that $\ZZ \times \ZZ$ is convex in every left-ordering of $G$.

Thus every left-ordering of $G$ arises lexicographically from the short exact sequence 
\[ 1 \rightarrow \ZZ \times \ZZ \rightarrow G \rightarrow \langle z \rangle \rightarrow 1. 
\]
That is, if $P$ is the positive cone of a left-ordering of $G$ then $P = R \cup Q$, where $R$ is the positive cone of an ordering of $\ZZ \times \ZZ$ and $Q = P \setminus R$.  The image of $P$ under the action of $x z^k \in G$ is again $P$ if $k$ is even, for then $x z^k$ is central; and it is $R^{-1} \cup Q$ if $k$ is odd.
As such, each orbit of the $G$-action on $\LO(G)$ consists of exactly two elements.  Consequently $E_\mathrm{lo}(G)$ is smooth \cite[Example 6.1]{KecMil}.
\end{example}

We close out this section by presenting a construction of a group $T_{\infty}$ such that $E_\mathrm{lo}(T_{\infty})$ is as simple as possible, yet not smooth.  Recall that $E_0$ is the equivalence relation of ``eventual equality'' on the set of sequences $\{0,1\}^{\mathbb{N}}$.

\begin{theorem}[(Glimm-Effros dichotomy~\cite{HarKecLou})]
If \(E\) is a countable Borel equivalence relation, then either:
\begin{enumerate-(i)}
\item
\(E\) is smooth; or
\item
\(E_{0} \leq_{B} E\). (In fact, \(E_{0}\sqsubseteq_{B} E\).)
\end{enumerate-(i)}
\end{theorem}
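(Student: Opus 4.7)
The plan is to establish the dichotomy by assuming $E$ is not smooth and constructing a continuous injection $\phi\colon 2^{\mathbb{N}} \to X$ witnessing $E_{0}\sqsubseteq_{B} E$. I would follow the effective approach of Harrington, Kechris, and Louveau: work with the Gandy--Harrington topology on $X$, whose basic open sets are the lightface $\Sigma^1_1$ subsets relative to a real parameter coding $E$. Restricted to the Borel set of parameter-low reals this topology is Polish, and the essential technical ingredient is that each nonempty $\Sigma^1_1$ set is comeager in itself within this topology.

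First, I would set up the candidate reduction that works under smoothness. Define an auxiliary relation on $X$ by $x \sim y$ if and only if every $E$-invariant Borel set containing $x$ also contains $y$. Because $E$ is countable, its invariant Borel $\sigma$-algebra is countably generated, say by $\{B_n\}_n$, and the map $x \mapsto (\chi_{B_n}(x))_n \in 2^{\mathbb{N}}$ Borel-reduces $\sim$ to equality on $2^{\mathbb{N}}$. Hence, if $\sim = E$ then $E$ is smooth. Under the non-smoothness hypothesis we therefore obtain a ``bad pair'' $(x_0, x_1)$ with $x_0 \sim x_1$ but $(x_0, x_1) \notin E$.

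The main step is a Cantor-scheme construction. By the Feldman--Moore theorem write $E = \bigcup_n \mathrm{graph}(\gamma_n)$ for Borel partial automorphisms $\gamma_n$, and build a family $\{U_s : s \in 2^{<\mathbb{N}}\}$ of nonempty $\Sigma^1_1$ subsets of $X$, together with indices $n(s)$, satisfying: (i) $U_{s0}, U_{s1} \subseteq U_s$ with Gandy--Harrington diameters tending to zero along every branch; (ii) $\gamma_{n(s)}$ restricts to a bijection $U_{s0} \to U_{s1}$, so that branches whose addresses eventually agree yield $E$-related limits; (iii) branches with $E_0$-inequivalent addresses yield $E$-inequivalent limits. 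The splitting at each node is driven by the bad pair: transferring $(x_0, x_1)$ into $U_s$ via the $\sim$-saturation and the comeager-in-itself property of $\Sigma^1_1$ sets, one produces two disjoint $\Sigma^1_1$ refinements that are $E$-matched by some $\gamma_n$.

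The main obstacle is securing condition (iii): ensuring that any $\alpha, \beta \in 2^{\mathbb{N}}$ differing on infinitely many coordinates give $\phi(\alpha) \not\mathbin{E} \phi(\beta)$. The standard device is to fix at each level a Borel $E$-invariant set separating one half of the current bad pair from the other, and to arrange the split along this set; a diagonal argument against the countably many such sets produced during the construction then ensures that any two branches with infinite disagreement land in distinct $E$-classes. Assembling the scheme produces the required continuous embedding $\phi$, yielding both $E_0 \leq_B E$ and the stronger $E_0 \sqsubseteq_B E$.
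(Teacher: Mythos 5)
The paper does not prove this statement at all --- it is quoted from Harrington--Kechris--Louveau \cite{HarKecLou} --- so the only meaningful comparison is with their argument, which your sketch follows in outline. There is, however, a genuine gap at the pivotal step where you produce the ``bad pair''. You define $x \sim y$ to mean that $x$ and $y$ lie in the same $E$-invariant \emph{Borel} sets, and you claim that, since $E$ is countable, the invariant Borel $\sigma$-algebra is countably generated. Both halves of this step fail. First, for a countable Borel equivalence relation every $E$-class is itself an invariant Borel set, so with boldface Borel invariant sets one always has $\sim \, = E$; consequently your dichotomy ``either $\sim \, = E$ (and then $E$ is smooth) or there is a pair with $x_0 \sim x_1$, $\neg\, x_0 E x_1$'' never yields a bad pair, and the Cantor-scheme construction cannot even start. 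Second, the claim that the invariant Borel $\sigma$-algebra is countably generated is false precisely in the case you need it: if it were generated by $\{B_n\}_n$, then (by exactly the reduction you describe) $x \mapsto (\chi_{B_n}(x))_n$ would witness smoothness of $E$, so countable generation is equivalent to smoothness and cannot be assumed for a general countable $E$.

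This is exactly the point at which the effective machinery you invoke in your first paragraph must actually do work: in Harrington--Kechris--Louveau the auxiliary relation is defined from the \emph{lightface} $\Delta^1_1(z)$ invariant sets, of which there are only countably many, so the resulting relation $E^{*}$ is a legitimate smooth candidate, and when $E$ is not smooth the set of pairs $(x,y)$ with $x E^{*} y$ but $\neg\, x E y$ is a nonempty $\Sigma^1_1(z)$ set to which the Gandy--Harrington splitting and diagonalization can be applied. With that replacement, the remainder of your outline (Feldman--Moore functions $\gamma_n$, shrinking $\Sigma^1_1$ sets along a Cantor scheme, diagonalizing against invariant sets to secure $E_0$-inequivalence of branches that disagree infinitely often) is the right shape of the HKL argument; as written, though, the boldface definition of $\sim$ short-circuits the proof and the argument does not go through.
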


In the precise sense above, it is $E_0$ which is ``as simple as possible, yet not smooth''. 

\begin{example}
Here is an example of a group $T_{\infty}$, such that $E_\mathrm{lo}(T_{\infty})$ is Borel equivalent to $E_0$.  For each $n \geq 1$, let $T_n$ denote the group 
\[ \langle x_1, x_2, \ldots, x_n \mid x_i x_{i-1}x_i^{-1} = x_{i-1}^{-1} \mbox{ for $1<i \leq n$ and } x_i x_j = x_j x_i \mbox{ for $|i -j|>1$} \rangle.
\]
Then each $T_n$ is a Tararin group, i.e. it is a group admitting $2^n$ left-orderings (see \cite[Theorem 5.2.1]{KM96}).  The convex subgroups of each left-ordering of $T_n$ are precisely the subgroups $T_i$, $i \leq n$, together with the trivial subgroup $\{ id \}$.  Thus every ordering of $T_n$ is determined by the choice of signs for the generators.  

Now consider the group $T_{\infty}$ given by the following presentation
\[ \langle x_1, x_2, \ldots \mid x_i x_{i-1}x_i^{-1} = x_{i-1}^{-1} \mbox{ for $1<i \leq k$ and } x_i x_j = x_j x_i \mbox{ for $|i -j|>1$} \rangle.
\]
For every left-ordering of $T_{\infty}$, the convex subgroups of $T_{\infty}$ are precisely the subgroups $T_i$.  As such, the orderings of $T_{\infty}$ are in bijective correspondence with sequences $ (\varepsilon_i) \in \{0, 1\}^{\mathbb{N}}$ that encode the signs of the generators according to the rule: $x_i > 1$ if and only if $\varepsilon_i = 1$.  Moreover, it is not hard to see that the conjugation action of $T_{\infty}$ on the set $\mathrm{LO}(T_{\infty})$ yields an action of $T_{\infty}$ on $\{0,1\}^{\NN}$ given by: $x_j \cdot (\varepsilon_i)$ is the same as $(\varepsilon_i)$ in every entry except the $(j-1)^{th}$ position, which has been changed.

Consequently two left-orderings of $T_{\infty}$ are in the same orbit if and only if their corresponding sequences in $\{0, 1\}^{\mathbb{N}}$ are in the same orbit of $T_{\infty}$ under the action described above. This happens if and only if the sequences are eventually equal.
\end{example}

\section{Some reducibility results}
\label{reducibility_results}

In this section, we prepare a variety of results that are necessary for producing examples of groups $G$ for which $E_\mathrm{lo}(G)$ is universal, and in particular show that $E_\mathrm{lo}(\mathbb{F}_n) \sim_B E_\mathrm{lo}(\mathbb{F}_\infty)$ for all $n >1$.  

\begin{proposition}
\label{Thm : relconvex}
If \(C\) is left relatively convex in \(G\) and 
\begin{equation}
\label{weakly malnormal}
\tag{\(\ast\)}
\text{for all } h\in G\quad hCh^{-1}\subseteq C \implies h\in C,
\end{equation}
then \(E_\mathrm{lo}(C) \leq_{B} E_\mathrm{lo}(G)\).
\end{proposition}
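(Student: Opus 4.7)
The plan is to define a candidate reduction using the subset $Q \subseteq G$ provided by Proposition~\ref{prop : AntRiv}, namely
\[
f \colon \LO(C) \to \LO(G), \qquad f(P) = P \cup Q.
\]
Proposition~\ref{prop : AntRiv} guarantees that $f$ is well-defined, and since $Q$ is a fixed subset of $G$ the map is continuous and therefore Borel.

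For the forward direction, I would first establish that $c Q c^{-1} = Q$ for every $c \in C$: the inclusion $c Q c^{-1} \subseteq Q$ is immediate from condition~\ref{AntRiv : ii}, and applying the same to $c^{-1}$ yields equality. Consequently, if $P_2 = c P_1 c^{-1}$ with $c \in C$, then $f(P_2) = c(P_1 \cup Q)c^{-1} = c f(P_1) c^{-1}$, so $f(P_1)$ and $f(P_2)$ are $E_\mathrm{lo}(G)$-equivalent.

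For the reverse direction, the crucial first step is to show that $C$ is convex in every ordering of $G$ of the form $P \cup Q$ with $P \in \LO(C)$. I would begin by verifying $Q \cap C = \emptyset$: were $q \neq 1$ to lie in $Q \cap C$, then picking any $P \in \LO(C)$ with $q^{-1} \in P$ would place $q^{-1}$ in both $P \cup Q$ and $(P \cup Q)^{-1}$, contradicting Proposition~\ref{prop : AntRiv}. Convexity then follows: if $c, c' \in C$ and $h \in G$ satisfy $c^{-1} h,\, h^{-1} c' \in P \cup Q$, then either one of them lies in $P \subseteq C$, forcing $h \in C$, or else both lie in $Q$, in which case $c^{-1} c' = (c^{-1} h)(h^{-1} c') \in QQ \cap C \subseteq Q \cap C = \emptyset$, a contradiction.

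Now assume $f(P_2) = g f(P_1) g^{-1}$ for some $g \in G$. Since conjugation preserves convexity, both $C$ and $g C g^{-1}$ are convex in $f(P_2) = g(P_1 \cup Q) g^{-1}$. Because the convex subgroups of a fixed left-ordering are linearly ordered by inclusion, either $g C g^{-1} \subseteq C$ or $C \subseteq g C g^{-1}$; in either case, applying hypothesis~\eqref{weakly malnormal} to $g$ or $g^{-1}$ forces $g \in C$. Combined with $g Q g^{-1} = Q$, the equation $P_2 \cup Q = g P_1 g^{-1} \cup Q$ intersected with $C$ yields $P_2 = g P_1 g^{-1}$, completing the reduction. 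I expect the main obstacle to be identifying $C$ as convex in each ordering $P \cup Q$, since this is precisely what allows hypothesis~\eqref{weakly malnormal} to be leveraged via the chain structure of convex subgroups in a given left-ordering.
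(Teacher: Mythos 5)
Your proposal is correct and follows essentially the same route as the paper: the same reduction $f(P)=P\cup Q$, the observation $cQc^{-1}=Q$ for $c\in C$ via condition~\ref{AntRiv : ii}, and in the reverse direction the convexity of $C$ and $gCg^{-1}$ relative to the ordering together with the chain property of convex subgroups and hypothesis~\eqref{weakly malnormal}. The only difference is cosmetic: you verify directly that $Q\cap C=\emptyset$ and that $C$ is convex in each ordering $P\cup Q$, where the paper cites Proposition~\ref{prop : AntRiv} for this.
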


\begin{proof}
Define \(f\colon \LO(C) \to \LO(G)\) by setting \(f(P)= P \cup Q\). Clearly \(f\) is Borel, in fact it is continuous.
If \(g\in C\), and \(P,R \in \LO(C)\) such that \(gPg^{-1} = R\), then
\(gf(P)g^{-1} = f(R)\) because
\[
g f(P) g^{-1}    =  g (P\cup Q) g^{-1} 
		    	=  g P g^{-1} \cup gQg^{-1} 
		    	=  R \cup gQg^{-1} = R \cup Q.
			\]
The last equality holds by part~\ref{AntRiv : ii} of Proposition~\ref{prop : AntRiv} as \(Q\subseteq g^{-1}Qg\) implies \(gQg^{-1} \subseteq Q\).

On the other hand, we next claim that if \(h\in G\) and \(h(R\cup Q)h^{-1} = P\cup Q\) for some \(R,P\in \LO(C)\), then \(h\in C\)
and \(hRh^{-1} = P\).

Since \(h(R\cup Q)h^{-1}= hRh^{-1} \cup hQ h^{-1}\) is a positive cone, and
\(hRh^{-1}\subseteq hCh^{-1}\) is a positive cone, then \(hCh^{-1}\) is convex relative to \(<_{P\cup Q}\).
Moreover \(P\cup Q \in \LO(G)\) and \(P\in \LO(C)\) then \(C\) is convex relative to \(<_{P\cup Q}\).
It follows that  either \(hCh^{-1}\subseteq C\) or \(C \subseteq hCh^{-1}\), thus \(h\in C\) by \eqref{weakly malnormal}.
Since \(hQh^{-1} = Q\), we have
\(hRh^{-1}\cup Q = P\cup Q\), which implies \(hRh^{-1} = P\) as desired.
\end{proof}

\begin{proposition} 
\label{prop:free_grps}
Let $n \geq 2$.  Then
\(E_\mathrm{lo}(\mathbb{F}_{2}) \leq_{B} E_\mathrm{lo}(\mathbb{F}_{n})\)
and 
\(E_\mathrm{lo}(\mathbb{F}_{2}) \leq_{B} E_\mathrm{lo}(\mathbb{F_{\infty}})\).
\end{proposition}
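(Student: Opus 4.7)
The plan is to verify the hypotheses of Proposition~\ref{Thm : relconvex} for $C = \langle x_1, x_2\rangle \cong \mathbb{F}_2$ sitting inside $G = \mathbb{F}_n$ (respectively $\mathbb{F}_\infty$) as a free factor; the case $n=2$ is trivial, so assume $n \geq 3$ or $n = \infty$, and write the ambient group as a free product $G = C * K$ with $K$ the complementary free factor.

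The condition $(\ast)$ is a consequence of the fact that any free factor of a free product is malnormal, which I would verify by a direct normal-form calculation in $C * K$. Indeed, if $g \notin C$, then the reduced normal form of $g$ in $C*K$ contains at least one $K$-letter. A short case analysis on whether $g$ ends in a $K$-letter or a $C$-letter (in the latter case, after reducing the $C$-letters adjacent to $c$, one is left with a conjugate by an element ending in a $K$-letter) shows that for any nontrivial $c \in C$ the reduced form of $g c g^{-1}$ also contains a $K$-letter, and hence does not lie in $C$. Thus $g C g^{-1} \cap C = \{1\}$, and malnormality instantly yields $(\ast)$: if $h C h^{-1} \subseteq C$, then $h C h^{-1} \cap C = h C h^{-1}$ is nontrivial, forcing $h \in C$.

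The more substantial step is left relative convexity of $C$ in $G$. I would produce a set $Q \subseteq G$ satisfying (i)--(iii) of Proposition~\ref{prop : AntRiv} by fixing a left-ordering of $K$ and letting $Q$ collect those $g \in G \setminus C$ whose reduced normal form contributes positively on the $K$-side with respect to the fixed ordering on $K$. Crucially, this choice is independent of the left-ordering chosen on $C$, which is exactly what is needed to have a single $Q$ such that $P \cup Q \in \LO(G)$ for every $P \in \LO(C)$, as Proposition~\ref{prop : AntRiv} requires.

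I expect the main obstacle to be verifying (i)--(iii) for this $Q$: the naive lexicographic construction from the semidirect product splitting $G = C \ltimes \ker(G \to C)$ yields a positive cone in which $C$ is convex, but its outer part depends on the chosen left-ordering on $C$ and therefore does not produce a universal $Q$. One must instead work directly with the free-product normal form, appealing if needed to the free-product left-ordering constructions of Vinogradov, Bergman, or Antol\'in--Rivas to extract a $Q$ that is insensitive to the left-ordering of $C$. Once both hypotheses are checked, Proposition~\ref{Thm : relconvex} immediately yields $E_\mathrm{lo}(\mathbb{F}_2) \leq_B E_\mathrm{lo}(\mathbb{F}_n)$ and $E_\mathrm{lo}(\mathbb{F}_2) \leq_B E_\mathrm{lo}(\mathbb{F}_\infty)$ at once.
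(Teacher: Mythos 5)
Your overall route is the paper's: realize $\mathbb{F}_{2}$ as a free factor of $\mathbb{F}_{n}$ (or $\mathbb{F}_{\infty}$), obtain \eqref{weakly malnormal} from malnormality of free factors, and apply Proposition~\ref{Thm : relconvex}; that part is fine (and the $n=2$ case is indeed trivial). The soft spot is the relative convexity step, and here you have misdiagnosed the difficulty. The obstacle you call the ``main obstacle'' is illusory: the hypothesis of Proposition~\ref{Thm : relconvex} is only that $C$ be left relatively convex, i.e.\ convex in \emph{some} left-ordering of $G$; the single $Q$ that works simultaneously for every $P\in\LO(C)$ is then supplied automatically by Proposition~\ref{prop : AntRiv}, so you never need to construct a $Q$ ``insensitive to the ordering of $C$'' by hand. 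This is exactly how the paper proceeds: it quotes \cite[Corollary~20]{ADZ15} for the fact that the free factor $\mathbb{F}_{2}$ is left relatively convex, checks \eqref{weakly malnormal} via malnormality, and is done.

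Meanwhile, the $Q$ you actually sketch --- the elements of $G\setminus C$ whose reduced normal form ``contributes positively on the $K$-side'' --- is not a proof, and the natural readings of it fail condition~\ref{AntRiv : i} or~\ref{AntRiv : iii} of Proposition~\ref{prop : AntRiv}. For instance, with $t\in K$ positive and $c\in C$ nontrivial: if ``positive on the $K$-side'' means the first $K$-syllable is positive, then $g_{1}=tct^{-1}$ and $g_{2}=tc^{-1}t^{-5}$ both lie in $Q$, yet $g_{1}g_{2}=t^{-4}$ does not, so $QQ\not\subseteq Q$; reading off the last $K$-syllable fails symmetrically, and using the retraction that kills $C$ violates condition~\ref{AntRiv : iii} on the kernel. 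If you want an elementary argument in place of the citation, a correct one is: let $\rho\colon G\to C$ be the retraction killing $K$, let $N=\ker\rho$, and let $P_{N}$ be the restriction to $N$ of the positive cone of a bi-ordering of the free group $G$; then $Q=\{\,nc : n\in P_{N},\ c\in C\,\}$ satisfies \ref{AntRiv : i}--\ref{AntRiv : iii} (conjugation by $C$ preserves $P_{N}$ because the ambient ordering is bi-invariant), so $C$ is left relatively convex. With that repair, or with the citation as in the paper, your argument goes through and the two reductions follow at once from Proposition~\ref{Thm : relconvex}.
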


\begin{proof}
By \cite[Corollary 20]{ADZ15}, there exists a left-ordering of $\mathbb{F}_{n}$ such that $\mathbb{F}_{2}\subseteq \mathbb{F}_{3}\subseteq \cdots \subseteq \mathbb{F}_{n}$ are convex.
Since \(\mathbb{F}_{2} \) is malnormal in \(\mathbb{F}_{n}\), then \(\mathbb{F}_{2}\) satisfies \eqref{weakly malnormal}. Thus the result follows from Proposition~\ref{Thm : relconvex}.  The same proof holds when $n = \infty$.
\end{proof}


\begin{proposition}
\label{prop:infty to 2}
\(E_\mathrm{lo}(\mathbb{F}_{\infty}) \leq_{B} E_\mathrm{lo}(\mathbb{F}_{2})\).
\end{proposition}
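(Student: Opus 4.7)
The plan is to apply Proposition~\ref{Thm : relconvex} with $G = \mathbb{F}_{2}$, reducing the task to exhibiting a subgroup $C \leq \mathbb{F}_{2}$ with $C \cong \mathbb{F}_{\infty}$ that is simultaneously left-relatively convex in $\mathbb{F}_{2}$ and satisfies the weak malnormality condition~$(\ast)$.

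First I would pin down the embedding. The natural ``structural'' copies of $\mathbb{F}_{\infty}$ inside $\mathbb{F}_{2}$---the commutator subgroup $[\mathbb{F}_{2},\mathbb{F}_{2}]$, the kernel of $\phi \colon \mathbb{F}_{2} \to \mathbb{Z}$ sending $a \mapsto 1$ and $b \mapsto 0$, or $\langle a^{n}ba^{-n} : n \geq 0 \rangle$---all fail $(\ast)$: the first two are normal in $\mathbb{F}_{2}$, and the third satisfies $a C a^{-1} \subsetneq C$ with $a \notin C$. I would instead take $C$ to be generated by a sparse family of cyclically reduced words, for concreteness $w_{i} = a^{n_{i}} b a^{-n_{i}}$ with $\{n_{i}\}$ growing rapidly (e.g.\ $n_{i} = 2^{i}$). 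Standard free-group combinatorics show that such $\{w_{i}\}$ freely generate $C$, so $C \cong \mathbb{F}_{\infty}$.

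Second, I would verify the two conditions. For $(\ast)$, one supposes $h \in \mathbb{F}_{2}$ with $hCh^{-1} \subseteq C$ and uses the sparseness of $\{n_{i}\}$ to show combinatorially that every generator $hw_{i}h^{-1}$ being a word in the $w_{j}^{\pm 1}$ forces $h$ itself to be such a word; a cleaner variant is to choose the $n_{i}$ so that $C$ is outright malnormal in $\mathbb{F}_{2}$ by a ping-pong or small-cancellation argument. For left-relative convexity, I would build an explicit left-ordering of $\mathbb{F}_{2}$ making $C$ convex, either by invoking a convex-filtration result in the spirit of~\cite{ADZ15} used in Proposition~\ref{prop:free_grps}, or by constructing the ordering lexicographically adapted to the Schreier decomposition determined by the $w_{i}$.

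The main obstacle is exactly the tension between these two demands: convexity tends to want ``structured'' generators (so that a quotient-based lexicographic ordering makes $C$ convex), whereas weak malnormality wants ``generic'' or sparse generators (so that conjugation cannot map $C$ back into itself unless the conjugating element already lies in $C$). Simultaneously achieving both---by building the left-ordering adaptively around the chosen sparse generating set, rather than the other way around---is the technical heart of the proof. Once both properties are verified for a single $C$, Proposition~\ref{Thm : relconvex} immediately yields $E_{\mathrm{lo}}(\mathbb{F}_{\infty}) \leq_{B} E_{\mathrm{lo}}(\mathbb{F}_{2})$.
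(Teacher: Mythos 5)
Your proposal follows essentially the same route as the paper: inside $H=\ker(\mathbb{F}_2\to\mathbb{Z})$ one takes the subgroup $C$ generated by a sparse subfamily $w_i=a^{n_i}ba^{-n_i}$ of the standard basis of $H$ (the paper, following Thomas, only needs $|a^{\ell}Ca^{-\ell}\cap C|\le 1$ for all $\ell\neq 0$, which your choice $n_i=2^i$ realizes), establishes left relative convexity via the free-factor argument of \cite{ADZ15} applied to $H$ together with transitivity, verifies $(\ast)$ from sparseness plus the free-product structure of $H$, and then applies Proposition~\ref{Thm : relconvex}. One caveat: your suggested ``cleaner variant'' cannot work for generators of this form, since $a^{n_1-n_0}w_0a^{-(n_1-n_0)}=w_1$ while $a^{n_1-n_0}\notin C$, so such a $C$ is never malnormal in $\mathbb{F}_2$; but the main argument only needs the weak condition $(\ast)$, exactly as in the paper.
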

\begin{proof}
Suppose that $\mathbb{F}_2$ has generators $a, b$ and that \(h\colon \mathbb{F}_{2} \to \mathbb{Z}\) is the homomorphism defined by
\(h(a)=1\) and \(h(b)=0\).   Set \(H = \ker h\), and note that \(\{ a^{n} \mid n\in \mathbb{Z}\}\) is a set of coset representatives of \(H\) in \(\mathbb{F}_{2}\), from which Reidemeister-Schreier yields \(S=\{a^{n}ba^{-n}\mid n\in \mathbb{Z}\}\) as a basis of \(H\). Set $x_n =a^{n}ba^{-n}$ for ease of exposition. 

Noting that $ax_na^{-1} = x_{n+1}$, we can argue exactly as in \cite[Theorem~3.3]{Tho15} to find an infinite subset \(C\subseteq S\) satisfying \(|a^{\ell}Ca^{-\ell}\cap C|\leq 1\) for all \(0\neq \ell\in \mathbb{Z}\).  The subgroup \(\langle C\rangle\) of $\mathbb{F}_2$ is evidently isomorphic to \(\mathbb{F}_{\infty}\), and we claim that it satisfies the hypotheses of Proposition~\ref{Thm : relconvex}. Verifying this claim will complete the proof.

First, \(H\) is left relatively convex in \(\mathbb{F}_{2}\) because its quotient is left-orderable. Next, \(\langle C\rangle\) is left relatively convex in \(H\) by \cite[Corollary 20]{ADZ15} because we can express \(H\) as a free product with \(\langle C\rangle\) as one of the factors. Since left relative convexity is transitive, it follows that \(\langle C\rangle\) is left relatively convex in \(\mathbb{F}_{2}\).

Now, given any \(w\in \mathbb{F}_{2}\setminus \langle C\rangle\), write \(w=ga^{\ell}\) for some \(g\in H\) and \(\ell \in \mathbb{Z}\).
If \(\ell \neq 0\), then there exists \(x_{i}\in C\)
such that \(a^{\ell}x_{i}a^{-\ell}\notin \langle C\rangle\). It follows that
 \(wx_iw^{-1}=ga^{\ell}x_{i}a^{-\ell}g^{-1} \notin \langle C\rangle\). 
 Otherwise, suppose \(\ell=0\) and $w\langle C\rangle w^{-1} \cap \langle C\rangle \neq \{ 1\}$. Then $w \in C$ follows by observing that $w = g \in H$, and as $\langle C\rangle$ is one of the factors in an expression of $H$ as a free product, it is malnormal in $H$.
\end{proof}


\begin{proposition}
\label{reduction_prop}
Suppose that there is a short exact sequence of groups $$1 \rightarrow K \stackrel{i}{\rightarrow} G \stackrel{q}{\rightarrow} H \rightarrow 1,$$ that $K$ and $H$ are left-orderable and that $K$ admits a positive cone $P$ such that $gi(P)g^{-1} = P$ for all $g \in G$.  Then
\[ E_\mathrm{lo}(H) \leq_{B} E_\mathrm{lo}(G).
\]
\end{proposition}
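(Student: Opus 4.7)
The plan is to construct a lexicographic-style positive cone on $G$ from any positive cone of $H$, using the distinguished $G$-invariant positive cone $P$ of $K$; this is in the same spirit as the construction in Proposition~\ref{Thm : relconvex}, with the short exact sequence playing the role of the left-relative convexity hypothesis. Concretely, I would define $f\colon \LO(H) \to \LO(G)$ by
\[
  f(R) \coloneqq i(P) \cup q^{-1}(R).
\]

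The first task is to verify that $f(R)$ is a positive cone of $G$. Multiplicative closure reduces to four cases: $i(P)\cdot i(P) \subseteq i(P)$ and $q^{-1}(R)\cdot q^{-1}(R) \subseteq q^{-1}(R)$ follow directly from the positive-cone axioms for $P$ and $R$, while the mixed products land in $q^{-1}(R)$ because $q\circ i$ is trivial. For the partition axiom, the sets $i(P)$, $i(P)^{-1} = i(P^{-1})$, $q^{-1}(R)$, $q^{-1}(R)^{-1} = q^{-1}(R^{-1})$ and $\{1\}$ are pairwise disjoint and exhaust $G$; disjointness of $i(P)$ from $q^{-1}(R)$ uses that $1 \notin R$. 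Note that the $G$-invariance of $P$ is \emph{not} needed at this stage. The map $f$ is continuous, and hence Borel, because membership of $g \in f(R)$ depends only on the coordinate $q(g)$ of $R$ (or, when $g \in i(K)$, is independent of $R$ altogether).

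Next I would check that $f$ is a reduction. For the forward direction, assume $hR_1h^{-1}=R_2$ in $H$ and pick any lift $g\in G$ with $q(g)=h$. A one-line calculation using $q(gxg^{-1})=q(g)q(x)q(g)^{-1}$ gives $g\,q^{-1}(R_1)\,g^{-1}=q^{-1}(hR_1h^{-1})=q^{-1}(R_2)$, and the invariance hypothesis gives $g\,i(P)\,g^{-1}=i(P)$; together these yield $g\,f(R_1)\,g^{-1}=f(R_2)$. For the backward direction, suppose $g\,f(R_1)\,g^{-1}=f(R_2)$. Applying $q$ to both sides, and using surjectivity of $q$ together with $q(i(P))=\{1\}$, we obtain
\[
  \{1\} \cup q(g)\,R_1\,q(g)^{-1} \;=\; \{1\} \cup R_2,
\]
so $q(g)R_1 q(g)^{-1}=R_2$ and $(R_1,R_2)\in E_\mathrm{lo}(H)$.

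The only place where genuine care is needed is in the use of the invariance hypothesis in the forward direction: without $g\,i(P)\,g^{-1}=i(P)$ for every $g\in G$, different lifts of the same $h\in H$ would conjugate $i(P)$ to distinct positive cones of $K$, and $f$ would fail to carry $E_\mathrm{lo}(H)$-classes into $E_\mathrm{lo}(G)$-classes. Everything else is a direct verification of the positive-cone axioms and of the naturality of preimages under $q$.
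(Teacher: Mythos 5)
Your proposal is correct and follows essentially the same route as the paper: both define the reduction $R \mapsto q^{-1}(R) \cup P$ (your $i(P)$), use the $G$-invariance of $P$ for the forward implication, and apply $q$ to recover conjugacy in $H$ for the converse. The only difference is that you spell out the verification that $q^{-1}(R)\cup i(P)$ is a positive cone, which the paper leaves implicit as the standard lexicographic construction.
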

\begin{proof}
Fix a positive cone $P \in \LO(K)$ as in the statement of the theorem.  Define $f \colon \LO(H) \rightarrow \LO(G)$ by $f(Q) = q^{-1}(Q) \cup P$.
Given $h \in H$, suppose that $R, Q \in \LO(H)$ and satisfy $hRh^{-1} = Q$.  Choose $g \in G$ such that $q(g) = h$.  Then 
\[ gf(R)g^{-1} = g(q^{-1}(R) \cup P)g^{-1} = g(q^{-1}(R))g^{-1}  \cup gPg^{-1} = q^{-1}(Q) \cup P = f(Q).
\]

On the other hand, suppose that $gf(R)g^{-1} = f(Q)$ for positive cones $R, Q \in \LO(H)$.  Then $g(q^{-1}(R) \cup P)g^{-1} = g(q^{-1}(R))g^{-1} \cup P = q^{-1}(Q) \cup P$.  Applying the homomorphism $q$ gives $q(g) Rq(g)^{-1} = Q$.
\end{proof}

\begin{corollary}
\label{corquotient}
Suppose that there is a short exact sequence of groups $$1 \rightarrow K \stackrel{i}{\rightarrow} G \stackrel{q}{\rightarrow} H \rightarrow 1,$$ where $H$ is left-orderable and $G$ is bi-orderable.  Then
\[
E_\mathrm{lo}(H) \leq_{B} E_\mathrm{lo}(G).
\]
\end{corollary}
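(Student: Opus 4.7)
The plan is to deduce the corollary as a direct application of Proposition~\ref{reduction_prop}. To do this, it suffices to exhibit a positive cone $P \in \LO(K)$ with the property that $gi(P)g^{-1} = P$ for every $g \in G$, because then the hypothesis of Proposition~\ref{reduction_prop} is met (we already have $H$ left-orderable, and $K$ is left-orderable since $G$ is bi-orderable and hence left-orderable, and subgroups of left-orderable groups are left-orderable).

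To produce $P$, the strategy is to use the bi-ordering of $G$ to induce a conjugation-invariant ordering on $K$. First I would fix a bi-ordering of $G$ with positive cone $P_{G}$, and recall the defining feature of bi-orderability: $gP_{G}g^{-1} = P_{G}$ for every $g \in G$. Then I would set $P := i^{-1}(P_{G})$, which is a positive cone on $K$ since $P_{G}$ is a positive cone on $G$ and $i$ is an injective homomorphism; equivalently, $i(P) = P_{G} \cap i(K)$.

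The verification of invariance is then immediate: for any $g \in G$,
\[
gi(P)g^{-1} = g\bigl(P_{G} \cap i(K)\bigr)g^{-1} = \bigl(gP_{G}g^{-1}\bigr) \cap \bigl(gi(K)g^{-1}\bigr) = P_{G} \cap i(K) = i(P),
\]
where we have used both that $P_{G}$ is conjugation-invariant (because the ordering on $G$ is a bi-ordering) and that $i(K)$ is normal in $G$ (because it is the kernel of $q$). Applying Proposition~\ref{reduction_prop} with this choice of $P$ yields $E_\mathrm{lo}(H) \leq_{B} E_\mathrm{lo}(G)$, as desired.

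There is essentially no obstacle here: the argument is a one-line consequence of Proposition~\ref{reduction_prop} once one observes that bi-orderability of $G$ automatically produces a positive cone on any normal subgroup that is invariant under conjugation by the whole group. The only point that deserves a brief remark is the trivial case when $K = \{1\}$, in which case $P = \varnothing$ works vacuously and $G \cong H$ makes the conclusion tautological.
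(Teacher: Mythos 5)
Your proof is correct and follows essentially the same route as the paper: the paper's one-line proof is exactly the observation that restricting a bi-ordering of $G$ to the normal subgroup $i(K)$ yields a positive cone on $K$ invariant under conjugation by all of $G$, which you have simply written out in full detail before applying Proposition~\ref{reduction_prop}.
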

\begin{proof}
A choice of positive cone $P \in \LO(K)$ as in Proposition \ref{reduction_prop} is always possible when $G$ is bi-orderable.
\end{proof}

\begin{corollary}
\label{corollary : ses countable G} Suppose that $G$ is a countable, left-orderable group.  Then 
\(E_\mathrm{lo}(G) \leq_{B} E_\mathrm{lo}(\mathbb{F}_{\infty})\).
\end{corollary}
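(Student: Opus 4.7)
The plan is to exhibit $G$ as a quotient of $\mathbb{F}_\infty$ and invoke Corollary~\ref{corquotient}. Since $G$ is countable, fix any countable generating set for $G$ and let $q\colon \mathbb{F}_\infty \to G$ be the induced surjection. Setting $K = \ker q$, we obtain a short exact sequence
\[
1 \rightarrow K \rightarrow \mathbb{F}_\infty \stackrel{q}{\rightarrow} G \rightarrow 1.
\]
Here $G$ is left-orderable by hypothesis, so to apply Corollary~\ref{corquotient} it suffices to know that $\mathbb{F}_\infty$ is bi-orderable. This is a classical fact (for example, via the Magnus embedding into the ring of formal power series, or via the lower central series quotients being torsion-free abelian), and I would simply cite it.

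Once bi-orderability of $\mathbb{F}_\infty$ is in hand, Corollary~\ref{corquotient} applied to the displayed sequence yields $E_\mathrm{lo}(G) \leq_B E_\mathrm{lo}(\mathbb{F}_\infty)$, which is exactly the desired conclusion. There is no real obstacle: the only point that requires anything beyond the hypotheses and the machinery already developed in Section~\ref{reducibility_results} is the bi-orderability of the free group, which is standard.
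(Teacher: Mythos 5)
Your proposal is correct and follows exactly the paper's own argument: present $G$ as a quotient of $\mathbb{F}_\infty$ (possible since $G$ is countable), form the resulting short exact sequence, and apply Corollary~\ref{corquotient} using the classical bi-orderability of the free group. Nothing is missing.
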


\begin{proof}
Let \(K =\ker p \triangleleft \mathbb{F}_{\infty}\) where
\(p\colon \mathbb{F}_{\infty}\to G\) is a surjective homomorphism, this exists since $G$ is countable. Then consider the short exact sequence $ 1 \rightarrow K \rightarrow \mathbb{F}_\infty \rightarrow G \rightarrow 1$. Since \(\mathbb{F}_{\infty}\) is bi-orderable, it follows from Corollary~\ref{corquotient} that \(E_\mathrm{lo}(G)\) is Borel reducible to \(E_\mathrm{lo}(\mathbb{F}_{\infty})\). 
\end{proof}

\section{Universality of \(E_\mathrm{lo}(\mathbb{F}_{2})\)}

Let  \(E(\mathbb{F}_{2},2)\) be the equivalence relation arising from the left-shift action of \(\mathbb{F}_{2}\) on \(2^{\mathbb{F}_{2}} = \mathcal{P}(\mathbb{F}_{2})\). We shall use the fact that \(E(\mathbb{F}_{2},2)\) is a universal countable Borel equivalence relation (see~\cite[Proposition~1.8]{DouJacKec}). We begin with a few preliminaries concerning lexicographic bi-orderings of direct sums and wreath products, which will be used in the proof.

\begin{lemma}
\label{lex ords}
Suppose that $\{G_i\}_{i \in I}$ is a family of bi-orderable groups, and that for each $i \in I$ the subset $P_i \subset G_i$ is a positive cone of a bi-ordering of $G$.  Suppose further that $\prec$ is a total ordering of $I$, and for each $f \in \bigoplus_{i\in I} G_i$, set $i_f = \min_{\prec} \{i \mid f(i) \neq 1 \}$.  Then $$P = \{ f \in \bigoplus_{i\in I} G_i \mid f(i_f) \in P_{i_f} \}$$ is the positive cone of a bi-ordering of $\bigoplus_{i\in I} G_i$.
\end{lemma}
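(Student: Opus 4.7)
The plan is to directly verify that the set $P$ satisfies the three defining properties of a bi-ordering's positive cone: the subsemigroup condition $P \cdot P \subseteq P$, the trichotomy $P \sqcup P^{-1} \sqcup \{1\} = \bigoplus_{i\in I} G_i$, and the conjugation-invariance $hPh^{-1} \subseteq P$ for all $h \in \bigoplus_{i\in I} G_i$. The key observation that makes everything go through is that any $f$ in the direct sum has finite support, so $i_f$ is genuinely the minimum of a finite subset of $(I, \prec)$ and is well-defined.

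For trichotomy, I would start by noting that the support of $f^{-1}$ equals that of $f$, so $i_{f^{-1}} = i_f$ and $f^{-1}(i_f) = f(i_f)^{-1}$. Since $G_{i_f}$ is bi-ordered by $P_{i_f}$, exactly one of $f(i_f) \in P_{i_f}$, $f(i_f)^{-1} \in P_{i_f}$, $f(i_f) = 1$ holds; but $f(i_f) \ne 1$ by the definition of $i_f$, so $f \in P$ or $f \in P^{-1}$ but not both. For the subsemigroup property, given $f, g \in P$, I would compute $i_{fg}$ by splitting into cases based on whether $i_f \prec i_g$, $i_g \prec i_f$, or $i_f = i_g$. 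The first two cases give $i_{fg} = \min_\prec(i_f, i_g)$ with $(fg)(i_{fg})$ equal to whichever of $f(i_f), g(i_g)$ contributes, hence in the appropriate $P_i$. In the coincident case $i_f = i_g = k$ one uses $P_k \cdot P_k \subseteq P_k$ together with the fact that $P_k \cap P_k^{-1} = \emptyset$ to ensure $(fg)(k) \in P_k$ and in particular $(fg)(k) \ne 1$, so $i_{fg} = k$.

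For conjugation-invariance, the crucial simplification is coordinatewise: $(hfh^{-1})(i) = h(i) f(i) h(i)^{-1}$ for every $i$, so the support of $hfh^{-1}$ agrees with the support of $f$, giving $i_{hfh^{-1}} = i_f$. Then the $i_f$-coordinate of $hfh^{-1}$ is $h(i_f) f(i_f) h(i_f)^{-1}$, which lies in $P_{i_f}$ precisely because $P_{i_f}$ is the positive cone of a \emph{bi}-ordering of $G_{i_f}$, so it is closed under conjugation by elements of $G_{i_f}$.

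I expect the routine case analysis for $P \cdot P \subseteq P$ when $i_f = i_g$ to be the only place that requires any care, since one has to rule out the possibility that $f(k)g(k) = 1$ and explicitly verify that no coordinate $\prec k$ becomes nontrivial after multiplication; both points follow immediately from $P_k$ being a positive cone and from $f(l) = g(l) = 1$ for $l \prec k$. Everything else is essentially a bookkeeping exercise with the minimum-support index.
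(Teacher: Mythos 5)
Your proposal is correct and follows essentially the same route as the paper's proof: a direct verification of trichotomy, the semigroup property via the case analysis on $i_f$ versus $i_g$, and conjugation invariance via $i_{hfh^{-1}} = i_f$ together with conjugation invariance of $P_{i_f}$ in $G_{i_f}$. If anything, you are slightly more explicit than the paper in checking that $(fg)(k) \neq 1$ in the coincident case, which is a fine point worth keeping.
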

\begin{proof} 
Set $G = \bigoplus_{i\in I} G_i$.  It is clear from the definition that $P \cup P^{-1} = G \setminus \{ 1\}$, and that $P \cap P^{-1} = \emptyset$.  

To see that $P$ is a semigroup, suppose that $g, f \in P$.  If $i_f \neq i_g$, note that $i_{fg} = \min_{\prec}\{i_f, i_g\}$ and consider two cases.  In the case that $i_{fg} = i_f$, then $fg(i_{fg}) = f(i_f)  g(i_f) = f(i_f) \in P_{i_f} = P_{i_{fg}}$, so that $fg \in P$.   The case when $i_{fg} = i_g$ is similar.  When $i_f = i_g = i_{fg}$, then $fg(i_{fg}) =f(i_f)  g(i_g)$ is a product of elements in the positive cone $P_{i_{fg}}$, and so again lies in $P_{i_{fg}}$.

Last we check conjugation invariance of $P$.  Suppose that $f \in P$ and $g \in G$ is arbitrary, and note that $i_{gfg^{-1}} = i_f$. So upon observing that $gfg^{-1}(i_{gfg^{-1}}) = g(i_f) f(i_f) g(i_f)^{-1} \in P_{i_f}$ (since $g(i_f)P_{i_f}g(i_f)^{-1} \subset P_{i_f})$, we can conclude that $gPg^{-1} \subset P$ for all $g \in G$.
\end{proof}

Next, let \(C\) be a countable bi-orderable group. We will consider the restricted wreath product \(C\wr \mathbb{F}_{2}\) of \(\mathbb{F}_{2}\) and \(C\), recall this is defined as follows.  For each function \(f\colon \mathbb{F}_{2}\to C\) set
\[
S(f) = \setm{x\in \mathbb{F}_{2}}{f(x)\neq 1}.
\]
Then consider the group
\[
B=\setm{f\colon \mathbb{F}_{2}\to C}{S(f)\text{ is finite}}
\]
with point-wise multiplication, i.e. \(fg(x) = f(x)g(x)\) for all \({f,g}\in B\) and \(x\in \mathbb{F}_{2}\).
Clearly \(\mathbb{F}_{2}\) acts on \(B\) by
\[
a\cdot f(x) = f(a^{-1}x).
\]
The restricted wreath product \(C\wr \mathbb{F}_{2}\) is defined as \(B\rtimes \mathbb{F}_{2}\), and we will denote it by $W$.  For each \(x \in \mathbb{F}_{2}\), let
\[
C_{x} = \setm{f\in B}{f(y)= 1\text{ for all \(y\neq x\in \mathbb{F}_{2}\)}}.
\]
From this, it is clear that \(B= \bigoplus_{x\in\mathbb{F}_{2}} C_{x}\); and \(aC_{x}a^{-1}=C_{ax}\) for all \(a,x\in\mathbb{F}_{2}\).
We use this expression for $B$ in the proof below, as well as the short exact sequence
\[
1 \to B\stackrel{i}\to W \stackrel{q}\to \mathbb{F}_{2} \to 1. \tag{$*$}
\]

Then Theorem~\ref{maintheorem} is a consequence of the following:

\begin{proposition}
\label{Prop : reductionwreath}  With notation as above, 
\(E(\mathbb{F}_{2},2) \leq_{B}  E_\mathrm{lo}(W)\). Thus, \(E_\mathrm{lo}(W)\) is a universal countable Borel equivalence relation.
\end{proposition}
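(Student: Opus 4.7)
The plan is to construct a continuous reduction $\Phi\colon 2^{\mathbb{F}_{2}}\to \LO(W)$ witnessing $E(\mathbb{F}_{2},2)\leq_{B} E_\mathrm{lo}(W)$ via the lexicographic construction along the short exact sequence~$(*)$. Since $E_\mathrm{lo}(W)$ is automatically a countable Borel equivalence relation, the universality of $E(\mathbb{F}_{2},2)$ will then transfer to $E_\mathrm{lo}(W)$. The guiding idea is to use a subset $A\subseteq\mathbb{F}_{2}$ to encode an orientation pattern on the factors $C_{x}\subseteq B$, arranged so that left-shifting $A$ by $f\in\mathbb{F}_{2}$ corresponds exactly to conjugating the resulting positive cone by $f$.

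First, we fix a bi-ordering positive cone $P_{0}$ of $\mathbb{F}_{2}$ (available since $\mathbb{F}_{2}$ is bi-orderable) and use the underlying (left-invariant) total order $\prec$ as the index ordering on $\mathbb{F}_{2}$; we also fix a bi-ordering positive cone $P$ of $C$, assuming $C$ to be nontrivial. For each $A\subseteq\mathbb{F}_{2}$, equip each $C_{x}$ (identified with $C$) with the bi-ordering positive cone $P_{x}^{A}:=P$ if $x\in A$ and $P_{x}^{A}:=P^{-1}$ otherwise, and let $Q_{A}$ be the positive cone of a bi-ordering of $B$ produced by Lemma~\ref{lex ords} applied with the index order $\prec$. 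Declare
\[
\Phi(A) \;=\; R_{A} \;:=\; q^{-1}(P_{0}) \cup Q_{A}.
\]
A routine case analysis---using that $q^{-1}(P_{0})\subseteq W\setminus B$ while $Q_{A}\subseteq B$---confirms $R_{A}\in\LO(W)$, and $\Phi$ is manifestly continuous since for $w=(b,f)\in W$ membership of $w$ in $R_{A}$ depends only on $f\in P_{0}$ or, when $f=1$, on whether $\min_{\prec}S(b)\in A$.

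The heart of the argument is the equivariance identity
\[
w R_{A} w^{-1} \;=\; R_{q(w)A} \qquad \text{for all } w\in W.
\]
Decomposing $w=bf$ with $b\in B$ and $f\in\mathbb{F}_{2}$: conjugation by $b$ preserves $q^{-1}(P_{0})$ (since $B=\ker q$ gives $q(bwb^{-1})=q(w)$) and preserves $Q_{A}$ (since $Q_{A}$ comes from a \emph{bi}-ordering of $B$); conjugation by $f$ preserves $q^{-1}(P_{0})$ via the bi-invariance $fP_{0}f^{-1}=P_{0}$. The nontrivial identity $fQ_{A}f^{-1}=Q_{fA}$ is established by computing, for $h\in B$ and $g=fhf^{-1}$, that $g(x)=h(f^{-1}x)$, so $S(g)=fS(h)$ and $x_{g}=fx_{h}$ by left-invariance of $\prec$ (where $x_{h}:=\min_{\prec}S(h)$); hence $g(x_{g})=h(x_{h})$, and combining with the identity $P_{fx_{h}}^{fA}=P_{x_{h}}^{A}$ (immediate from $fx_{h}\in fA\iff x_{h}\in A$) yields $g\in Q_{fA}\iff h\in Q_{A}$.

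Injectivity of $\Phi$ follows by selecting any nontrivial $c\in P$ and observing that the element $\delta_{x,c}\in C_{x}$---equal to $c$ at $x$ and $1$ elsewhere---lies in $Q_{A}=R_{A}\cap B$ if and only if $x\in A$, so $A$ is recovered from $R_{A}$. Thus $A'=aA$ yields $R_{A'}=aR_{A}a^{-1}$, while conversely $wR_{A}w^{-1}=R_{A'}$ forces $R_{A'}=R_{q(w)A}$ by equivariance and hence $A'=q(w)A$ by injectivity, completing the reduction. The hard part will be the index bookkeeping for $fQ_{A}f^{-1}=Q_{fA}$: nothing is conceptually deep, but one must carefully coordinate the left-invariance of $\prec$ with the covariance of the orientation toggle $x\mapsto P_{x}^{A}$ under the shift action on $\mathbb{F}_{2}$.
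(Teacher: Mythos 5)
Your proposal is correct and follows essentially the same route as the paper: encode $A\subseteq\mathbb{F}_{2}$ by toggling the positive cone $P$ versus $P^{-1}$ on the factors $C_x$, form the lexicographic bi-ordering of $B$ via Lemma~\ref{lex ords}, adjoin the pullback of a bi-ordering of $\mathbb{F}_{2}$, and verify the conjugation-equivariance $wR_Aw^{-1}=R_{q(w)A}$. The only (welcome) difference is that you spell out the injectivity of the map and the nontriviality of $C$, which the paper's proof leaves implicit when deducing the converse direction of the reduction.
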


\begin{proof}
Fix a bi-invariant positive cone \(P\) of \(C\), and let \(P^{-1}\) be the complement of \(P\) in
\(C\setminus\{1\}\).  Clearly \(P^{-1}\) is also the positive cone of a bi-ordering on \(C\).  Next, fix a left-ordering \(\prec\) on \(\mathbb{F}_{2}\).

Then, given \(A\subseteq \mathbb{F}_{2}\), for each \(x\in \mathbb{F}_{2}\) set

\[
P_{x} =
\begin{cases}
P &\text{if \(x\in A\),}\\
P^{-1}&\text{otherwise}
\end{cases}
\]
and define \(R_{A}\) as the positive cone of the lexicographic order on \(B= \bigoplus_{x\in\mathbb{F}_{2}} C_{x}\) with respect to \(\prec\) and the family of positive cones \(\{P_{x}\}_{x \in \mathbb{F}_2}\), as in Lemma~\ref{lex ords}.

Finally, let \(Q\) be the positive cone of a bi-ordering on \(\mathbb{F}_{2}\), and define a map $\varphi\colon \mathcal{P}(\mathbb{F}_{2}) \rightarrow \mathrm{LO}(W)$ by \(\varphi(A) = R_{A} \cup q^{-1}{(Q)}\).  Note that $\varphi(A)$ is the positive cone of a lexicographic left-ordering of $W$, constructed from the short exact sequence~$(*)$.
\big(Namely, for all \(w,w'\in W\) we have \(w<_{\varphi(A)} w'\) if and only if
\(q(w)<_Q q(w')\) or else \(q(w) = q(w')\), thus \(w^{-1}w \in \ker q = B\), 
and \(w^{-1}w \in R_A\).\big)

We claim that if $h \in \mathbb{F}_2$, then $h R_A h^{-1} = R_{hA}$. In fact, if \(b\in R_{A}\), then \(b\in \bigoplus_{x\in F} C_{x}\) for some finite \(F\subseteq \mathbb{F}_2\)
and \(b(x_0) \in P_{x_0}\) where \(x_0 = \min_{\prec} F\).
It follows that \(hbh^{-1} \in \bigoplus_{x\in hF} C_{x}\), \(hx_0 = \min_{\prec} hF\) and \(hbh^{-1}(hx_0) = b(x_0) \in P_{hx_0}\), so that $hbh^{-1} \in R_{hA}$.  The inclusion $h R_A h^{-1} \subset R_{hA}$ of positive cones implies $h R_A h^{-1} = R_{hA}$.

Now we check that $\varphi$ is a Borel reduction from \(E(\mathbb{F}_{2},2)\)  to \(E_\mathrm{lo}(W)\). Let \(g = hb \in W \) be any element with \(h\in \mathbb{F}_{2}\) and \(b\in B\).
We compute
\begin{align*}
hb\varphi(A) b^{-1} h^{-1} &=  hbR_{A} b^{-1} h^{-1} \cup hbq^{-1}(Q)b^{-1}h^{-1}\\
&=  hR_{A}  h^{-1} \cup q^{-1}(Q)\\
&= R_{hA} \cup q^{-1}(Q)\\
&= \varphi(hA),
\end{align*}
where the second equality follows from the fact that $R_A$ and $Q$ are positive cones of bi-orderings of $B$ and $\mathbb{F}_2$ respectively.
\end{proof}

From the previous theorem we are now able to prove Theorem~\ref{maintheorem}.

\begin{proof}[Proof of Theorem~\ref{maintheorem}]
By Proposition~\ref{prop:infty to 2} it suffices to observe that \(E_\mathrm{lo}(W)\) is Borel reducible to \(E_\mathrm{lo}(\mathbb{F}_{\infty})\),
which follows from Corollary~\ref{corollary : ses countable G} since \(W\) is countable.
\end{proof}

With the results of Section \ref{reducibility_results}, and universality of \(E_\mathrm{lo}(\mathbb{F}_{n})\) for all $n \geq 2$, it is relatively straightforward to produce left-orderable groups $G$ for which $E_\mathrm{lo}(G)$ is universal.

\begin{corollary} The relation 
 \(E_\mathrm{lo}(G)\) is a universal countable Borel equivalence relation whenever $G$ is:
\begin{enumerate-(i)}
\item 
\label{universal : 0} a left-orderable direct product of groups $A \times B$ such that $E_\mathrm{lo}(A)$ is universal,  
\item
\label{universal : i} a left-orderable free product of groups $A*B$ such that $E_\mathrm{lo}(A)$ is universal,
\item
\label{universal : ii}
a hyperbolic surface group,
\item
\label{universal : iii}
a pure braid group on \(n \geq 3\) strands,
\item 
\label{universal : iiii}
a nonabelian right-angled Artin group.
\end{enumerate-(i)}
\end{corollary}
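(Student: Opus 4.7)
The plan is to leverage Theorem~\ref{maintheorem} together with the reductions developed in Section~\ref{reducibility_results}, producing in each case a Borel reduction from an already-universal $E_\mathrm{lo}(\mathbb{F}_2)$ (or $E_\mathrm{lo}(A)$) into $E_\mathrm{lo}(G)$. Since every group $G$ under consideration is countable, the reverse reduction $E_\mathrm{lo}(G) \leq_B E_\mathrm{lo}(\mathbb{F}_\infty)$ is immediate from Corollary~\ref{corollary : ses countable G}, so universality of $E_\mathrm{lo}(G)$ will follow at once.

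For (ii), I would apply Proposition~\ref{Thm : relconvex} to the inclusion $A \hookrightarrow A \ast B$: the factor $A$ is left relatively convex in $A \ast B$ by \cite[Corollary~20]{ADZ15}, and free factors of a nontrivial free product are malnormal, which is exactly condition $(\ast)$. For (iii)--(v) I would use Corollary~\ref{corquotient}, exhibiting $G$ as a bi-orderable group with a surjection onto a group whose $E_\mathrm{lo}$ is universal. Concretely, $\pi_1(\Sigma_g)$ is bi-orderable by Rolfsen--Wiest and surjects onto $\mathbb{F}_g$ by killing $b_1,\dotsc,b_g$ in the standard presentation; the pure braid group $P_n$ is bi-orderable by Kim--Rolfsen, and iterating the Birman short exact sequence yields a surjection $P_n \twoheadrightarrow P_3 \cong \mathbb{F}_2 \times \mathbb{Z}$, after which case (i) applied with $A=\mathbb{F}_2$, $B=\mathbb{Z}$ supplies universality of $E_\mathrm{lo}(P_3)$; and any nonabelian RAAG $A_\Gamma$ is bi-orderable (being residually torsion-free nilpotent) and surjects onto $\mathbb{F}_2$ by sending two nonadjacent vertices to the free generators and all other vertices to the identity, a map that respects all commutation relations because each surviving relation involves a collapsed generator.

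The one step that requires a genuine adaptation, and hence the main obstacle, is case~(i). Proposition~\ref{reduction_prop} as stated does not apply verbatim to the sequence $1 \to B \to A \times B \to A \to 1$, because a $G$-invariant positive cone in $B$ is a bi-invariant cone and so exists only when $B$ is bi-orderable. My workaround is to fix any $P_B \in \LO(B)$ and to define $f \colon \LO(A) \to \LO(A\times B)$ by $f(P) = \pi_A^{-1}(P) \cup P_B$, exactly as in the proof of Proposition~\ref{reduction_prop}. The forward direction still goes through because one may choose the preimage of $a \in A$ to be $(a,1) \in A \times \{1\}$, and $A \times \{1\}$ centralizes $\{1\}\times B$ pointwise, so conjugation by $(a,1)$ fixes $P_B$ while sending $\pi_A^{-1}(P)$ to $\pi_A^{-1}(aPa^{-1})$. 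The reverse direction is essentially as in Proposition~\ref{reduction_prop}: the identity $(a_0,b_0)\,f(P)\,(a_0,b_0)^{-1} = f(Q)$ splits into $\pi_A^{-1}(a_0 P a_0^{-1}) = \pi_A^{-1}(Q)$ and $b_0 P_B b_0^{-1} = P_B$, from which $a_0 \in A$ is extracted as the required witness.
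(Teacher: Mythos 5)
Your argument is correct, and for items (ii)--(v) it is essentially the paper's proof: Proposition~\ref{Thm : relconvex} together with relative convexity and malnormality of free factors handles $A*B$, and Corollary~\ref{corquotient} handles the bi-orderable groups via nonabelian free quotients (your explicit quotient maps for surface groups and right-angled Artin groups are precisely the facts the paper cites without proof; as in the paper, your surface argument as written treats the orientable case). The genuine differences are in (i) and (iv). For the direct product the paper simply invokes Proposition~\ref{reduction_prop}; you rightly observe that its hypothesis asks for a cone of $B$ invariant under conjugation by all of $A\times B$, i.e.\ a bi-invariant cone on $B$, which left-orderability of $A\times B$ alone does not supply, and your replacement reduction $f(P)=\pi_A^{-1}(P)\cup P_B$ with the lift $(a,1)$ is a correct and more careful patch (the backward direction does follow, as you say, by separating the elements with nontrivial $A$-coordinate to extract $a_0Pa_0^{-1}=Q$). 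For the pure braid groups the paper quotes Cohen--Falk--Randell to get a nonabelian free quotient of $P_n$ and applies Corollary~\ref{corquotient} directly, whereas you forget strands down to $P_3\cong\mathbb{F}_2\times\mathbb{Z}$ and then feed this into your case (i); both routes are valid, the paper's being independent of case (i). One small superfluity: the reverse reduction $E_\mathrm{lo}(G)\leq_{B}E_\mathrm{lo}(\mathbb{F}_\infty)$ from Corollary~\ref{corollary : ses countable G} is not needed, since universality as defined only requires $E_\mathrm{lo}(G)$ to be a countable Borel equivalence relation (which is automatic) lying $\leq_{B}$-above a universal one.
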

\begin{proof}
With \(G\) as in \ref{universal : 0} the result follows from Proposition \ref{reduction_prop}.
If \(G\) is as in \ref{universal : i} the result follows from Proposition~\ref{Thm : relconvex}, and the fact that every factor in a free product is both relatively convex and malnormal. Instead, if \(G\) is as in  \ref{universal : ii}--\ref{universal : iiii}, then \(G\) is bi-orderable: bi-orderability of surface groups appears in \cite{RW01}, pure braid groups in~\cite[Chapter XV]{DDRW08}, while right-angled Artin groups are residually torsion-free nilpotent, hence bi-orderable. We can then use Corollary \ref{corquotient} as hyperbolic surface groups, pure braid groups $P_n$ ($n \geq 3$), and nonabelian right angled Artin groups admit free nonabelian quotients. (See \cite[Corollary 3.7]{CFR12} for a proof that $P_n$, $n \geq3$ has a nonabelian free quotient.)
\end{proof}

\begin{remark}
The fact that  \ref{universal : ii}--\ref{universal : iiii} are not standard does not follow from Theorem~\ref{thm : locally indicable}, since these groups are bi-orderable hence locally indicable.
\end{remark}

Having found examples of groups $G$ for which $E_\mathrm{lo}(G)$ is smooth, for which $E_\mathrm{lo}(G) \sim_B E_0$ and for which $E_\mathrm{lo}(G)$ is universal, the following is the natural next step.

\begin{problem}
Find a group $G$ such that $E_\mathrm{lo}(G)$ is intermediate.
\end{problem}


\end{document}